\def\H{{\cal H}}
\def\N{\mathbb{N}}
\def\R{\mathbb{R}}
\def\Z{\mathbb{Z}}
\def\C{\mathbb{C}}
\def\H2{H^2(\R^N)}
\def\L2{L^2(\R^N)}
\def\to{\rightarrow}
\def\H{{\cal H}}
\def\H1{H^1(\R)}
\newcommand{\om}{\omega}
 \newcommand{\ft}{{\mathcal{F}}}
 \newcommand{\Ga}{\Gamma}
\newcommand{\ls}{\lesssim}
 \newcommand{\Del}[1]{}
\numberwithin{equation}{section}
\newtheorem{thm}{Theorem}[section]
\newtheorem{cor}[thm]{Corollary}
\newtheorem{lem}[thm]{Lemma}
\newtheorem{prop}[thm]{Proposition}
\theoremstyle{definition}
\newtheorem{remark}[thm]{\textbf{Remark}}
\newtheorem*{exam*}{Examples}
\begin{document}

\setcounter{page}{1}

\title[Solitary waves to fKdV]{A direct construction of solitary waves for a fractional Korteweg-De Vries equation with an inhomogeneous symbol}

\author{Swati Yadav}
\address{Department of Mathematical Sciences, Norwegian University of Science and
Technology, 7491 Trondheim, Norway}
\email{swati.yadav@ntnu.no, yswati.iitbhu@gmail.com}
\thanks{}

\author{Jun Xue}
\address{Department of Mathematical Sciences, Norwegian University of Science and
Technology, 7491 Trondheim, Norway}
\email{jxuemath@hotmail.com}
\thanks{}

\subjclass[2020]{Primary 76B25, 35C07; Secondary 76B03}
\keywords{Inhomogeneous dispersion; Negative-order dispersive equations; Solitary waves}

\begin{abstract}\noindent
We construct solitary waves for the fractional Korteweg--De Vries type equation
\begin{align*}
u_t + (\Lambda^{-s}u + u^2)_x = 0,
\end{align*}
where  $\Lambda^{-s}$ denotes the Bessel potential operator $(1 + |D|^2)^{-\frac{s}{2}}$ for  $s \in (0,\infty)$. The approach is to parameterise the known periodic solution curves through the relative wave height. Using a priori estimates, we show that the periodic waves locally uniformly converge to waves with negative tails, which are transformed to the
desired branch of solutions. The obtained branch reaches a highest wave, the behavior of which varies with  $s$. The work is a generalisation of recent work by Ehrnström--Nik--Walker, and is as far as we know the first simultaneous construction of small, intermediate and highest solitary waves for the complete family of (inhomogeneous) fractional KdV equations with negative-order dispersive operators. The obtained waves display exponential decay rate as \(|x| \to \infty\). 
\end{abstract}

\maketitle

\section{Introduction}
This work focuses on the direct construction of a full family of solitary waves for a fractional Korteweg--de Vries (fKdV) equation of the form
\begin{align}
u_t + (\Lambda^{-s}u + u^2)_x = 0. 	\label{eq-fKdV}
\end{align}
Here, $u(t, x)$ is a real-valued function representing the deflection of a water fluid surface from its rest position 
at time $t \ge 0$ and position $x \in \R$. The symbol $\Lambda^{-s}$  is a Fourier multiplier operator of order $-s$ defined through
\begin{align*}
	\Lambda^{-s}\colon \quad f \mapsto  \mathcal F^{-1} 
	\big[    (1 + \xi^2)^{-\frac{s}{2}} \hat f(\xi)       \big],
\end{align*}
for all positive values of $s$. Sometimes referred to as a Bessel potential operator, this operator can also be expressed via convolution as
$\Lambda^{-s} f = K_s * f$, where the convolution kernel $K_s$ is given by
\begin{align}
	K_s(x) = \ft^{-1} \big(  (1 + \xi^2)^{-\frac{s}{2}}  \big)(x)
=\frac{1}{2\pi} \int_{\mathbb R} (1 + \xi^2)^{-\frac{s}{2}}  e^{ix \xi} \, d\xi.
\label{kernel-Ks}
\end{align}
The last equality defines the Fourier transform. We shall consider travelling-wave solutions $u(x, t) = \varphi(x - \mu t)$, where $\mu$ is the (rightward) wave-speed. In this setting, equation  \eqref{eq-fKdV} reads
\begin{align*}
	-\mu \varphi' + (\varphi^2)' + (\Lambda^{-s}\varphi)' = 0.
\end{align*}
By integrating the above equation, we obtain the steady equation
\begin{align}
	-\mu \varphi + \varphi^2 + \Lambda^{-s}\varphi = 0, \label{eq:steady}
\end{align}
where the right-hand side can be set to zero by the Galilean transformation
\begin{align*}
\varphi \mapsto \varphi + \rho,\quad \mu \mapsto \mu + \rho,   
\end{align*}
with $\rho$ chosen such that $\rho(1 - \mu - \frac{\rho}{2})$ cancels the constant of integration.

The equation \eqref{eq-fKdV} is an \emph{inhomogeneous} fractional KdV equation, and therefore a Whitham type equation \cite{WhithamWaterWaves}. It originates from the KdV equation by replacing the polynomial dispersion relation \(m_\text{KdV}(\xi) = 1 - \frac{1}{6}\xi^2\) expressed by the Fourier multiplier \(1 + \frac{1}{6}\partial_x^2\) with more exact approximations of the Euler dispersion relation $m_{\text{Euler}}(\xi)= \sqrt{\frac{\tanh{\xi}}{\xi}}$, or, as in the case of \eqref{eq-fKdV}, with full scales of equations describing varying strength and behaviour of the dispersion with respect to nonlinear effects. In the case of the Whitham equation, the symbol is exactly \(m_{\text{Euler}}\), describing the right-going branch of the dispersion relation for gravity water waves on finite depth. The modelling of bi-directional motion can be captured in Boussinesq equations \cite{Broer-Boussinesq-64}, including full-dispersion Whitham--Boussinesq systems with symbol $m_{\text{Euler}}^2(\xi)= \frac{\tanh{\xi}}{\xi}$, where the linear wave speed is the positive or negative root of this expression \cite{Lannes-Book-2013}. Many such models quantitatively well describe the propagation of small-amplitude waves in shallow water over moderate time-spans \cite{MR4321411}.

A direct construction of a full set of solitary waves for the Whitham equation with symbol $m_\text{Euler}$ was recently given in \cite{MR4531652}. In difference to the KdV equation, the Whitham equation features dispersion that allows both for solitary, breaking and highest waves. Ehrnstr\"{o}m and Wahl\'{e}n \cite{EhrnstromWahlen-AIHAN-19} were the first to prove that there exist periodic travelling wave solutions to the Whitham equation having exactly $1/2-$H\"{o}lder regularity. In the more recent paper \cite{TruongWahlen-arxiv-20}, the authors constructs global curves of solitary waves up to the highest wave for the same problem using a new nonlocal variant of the center manifold theorem, and global bifurcation. Whereas small solitary solutions for \eqref{eq-fKdV} was known from \cite{ehrnstrom2012existence,Hildrum2020small,Atanas-Wright}, global curves of solitary solutions up the highest wave have not been proved, and we construct them using the direct method of \cite{MR4531652}.

In the periodic setting of \eqref{eq-fKdV} more is known. Ørke in \cite{Orke-arxiv-22} established existence and regularity of the highest periodic waves for fKdV equation \eqref{eq-fKdV} when $s \in (0,1)$, and proved that the non-constant solutions (say $\varphi$) to the steady equation are everywhere smooth except where the wave-height equals half the wave-speed, $\varphi(0)=\mu/2$. As in the Whitham case, the highest periodic waves show $s-$H\"{o}lder regularity at the crest attained in the form of cusp. Le \cite{Le-AA-22} studied a similar model for the even weaker dispersion $s>1$, and proved that there the highest waves are Lipschitz continuous. Arnesen \cite{MR3987024} considered  the Degasperis--Procesi equation with symbol $(1 + \xi^2)^{-1}$ and used the nonlocal method to prove the existence of highest travelling-wave solutions. As we shall prove, when $s = 1$, highest waves are not Lipschitz, but display of cusp of logarithmic Lipschitz regularity near the origin. Similar behavior was observed by Ehrnstr\"{o}m et al. \cite{ehrnstrom2019existence} in the bidirectional Whitham equation, which has symbol \(m_\text{Euler}^2\), which corresponds to \(s=1\). Very recently, Ehrnstr\"{o}m, M\ae hlen, and Varholm \cite{Ehrnstrom_Ola_Kristo} completed this investigation by providing exact leading-order asymptotics for the highest waves to this and more general non-local equations.  In summary, the regularity of the highest periodic wave solutions varies on $s$ for values $s<1$;  for $s = 1$ it is log-Lipschitz; while for $s>1$ it is always exactly Lipschitz. The value of $s$ however does affect the angle, or `cusple', at the wave crest.

Other authors have  investigated various generalizations of KdV equation with \emph{homogeneous} symbols. The homogeneous equation corresponding to \eqref{eq-fKdV} is
\begin{align}\label{homogeneous}
u_t + |D|^{-s}u_x + uu_x = 0,
\end{align}
with $|D|^{s}$ given by \(\mathcal F (|D|^{s}f)(\xi) = |\xi|^s \hat f(\xi)\). Bruell and Dhara \cite{Bruell-Dhara-2018} showed the existence and regularity of periodic travelling-wave solutions of \eqref{homogeneous} when $s > 1.$ 
{Dahne and Gómez-Serrano \cite{MR4626005} studied the same for $s=1$  with an impressive combination of analysis and interval arithmetics.
Later, Dahne \cite{dahne2024highest} was able to show the regularity and existence of the highest travelling wave solutions for the whole family $s \in (0,1)$, with exact leading asymptotics at the origin (matching that of \cite{Ehrnstrom_Ola_Kristo} in comparable cases)}. Hildrum and Xue \cite{Hildrim-Xue-2023} earlier studied  \eqref{homogeneous} for the same interval of $s$, but for more general nonlinearities, in some cases finding waves with simultaneous singularities at both the crests and troughs.

The purpose of this study is to establish existence of a full family of solitary waves for the fKdV equation \eqref{eq-fKdV}  with an inhomogeneous symbol $\Lambda^{-s}$, for the full range of values $s \in \R^{+}$. This contrasts with the homogeneous case \eqref{homogeneous}, which does not allow for solitary waves in $H^1$ in the regime we are considering here \cite{MR3188389}. {For $\Lambda^{s}$ with positive $s$, solitary waves, when they exist, can be constructed variationally with arbitrary height (see \cite{MR3763731,MR4097537,marstrander2023solitary}).
In this paper, we follow the same method as in \cite{MR4531652}, but have to tale care of the separate cases  \(s \in (0,1)\), \(s = 1\) and \(s > 1\), where the latter case is entirely new. The approach is built upon using a priori bounds obtained from the periodic theory, and in some cases we perform this theory for the first time. }

The organisation of the rest of the paper is as follows: In Section \ref{sec-kernel} we study the convolution kernel $K_s$, and introduce periodised kernels $K_{s,P}$. 
In Section \ref{Solution_properties}, we discuss the properties of the solution of \eqref{eq:steady} including its regularity at the highest point. We construct a sequence of periodic waves in Section \ref{sec-construction} showing the convergence to the travelling waves which will eventually be transformed into solitary waves. 
{In the last subsection, we discuss the decay rate of the obtained solitary waves. Here, we mention the work by Arnesen \cite{MR4324298}, which more generally considers the decay and symmetry of nonlocal dispersive wave equations.}

\section{Bessel Potential and Convolution Kernel}\label{sec-kernel}

In this section, we discuss some properties, estimates, and findings related to the Bessel potential operator $\Lambda^{-s}$ and associated kernel $K_s$. The kernel $K_s$, defined by \eqref{kernel-Ks}, is a widely recognized inverse Fourier transform of $(1+\xi^2)^{-\frac{s}{2}}$ and is given by 
\begin{align*}
	K_s(x) = \frac{1}{2^{\frac{s-1}{2}} \sqrt{\pi} \Ga(\frac{s}{2}) } {G}_{\frac{1-s}{2}} \big( |x| \big) |x|^{\frac{s-1}{2}}, \quad \text{for} \ s>0,
\end{align*} 
where ${G}_{\frac{1-s}{2}}$ denotes the modified Bessel function of second kind \cite[Chap. 2, Sec. 4]{aronszajn1961theory}, which provides the rationale for referring to $\Lambda^{-s}$ as a Bessel potential operator. {This operator serves as a generalization of the Riesz potential function, which
has a limitation that the `$s$' must be less than the dimension $n$ of the Euclidean space. In contrast, $\Lambda^{-s}$ operates effectively for all positive values of $s$.} 
The majority of the properties concerning the kernel $K_s$ and the operator $\Lambda^{-s}$ discussed here can be found in the works of Aronszajn and Smith \cite[Chap. 2]{aronszajn1961theory} and Grafakos \cite[Sec. 1.2.2]{Grafakos-Book-09}.
The first part of this section focuses on the convolution kernel $K_s$ which will be useful in proving the results for the operator $\Lambda^{-s}$. However, we shall refrain from presenting most of the proofs as similar results have been discussed in \cite{Le-AA-22,Orke-arxiv-22}. 

We say $X \lesssim Y$ whenever $X \leq c Y$ for some positive constant $c$ and $X \simeq Y$ when we have $ Y \lesssim X \lesssim  Y$. We use symbols $\lesssim_s$ and $\simeq_s$ when the constants depend on a variable $s$.
$O(Y)$ represents any quantity $X$ for which $|X| \lesssim Y$, while $o(Y)$ indicates $X / Y \to 0$ as $Y \to 0$. 
The general definitions of the sets and spaces mentioned in this paper are taken from the book by Triebel \cite[chap. 1]{Triebel-Book-92}.

\begin{lem}\label{lem-K-1} \cite{Grafakos-Book-09}
The kernel $K_s$ can be written in the following integral expression
\begin{align*}
	K_s(x) = \frac{1}{\sqrt{4\pi}\Gamma(\frac{s}{2})} \int_{0}^{\infty}
 e^{-t- \frac{x^2}{4t}} t^{\frac{s-3}{2}}
\,\mathrm dt,
\end{align*}
provided $s>0$. $K_s$ is even, positive, continuous, and smooth on $\mathbb R \backslash \{0\}$. $K_s$ is decreasing for $x > 0$ and integrable such that $\|K_s\|_{L^1(\mathbb R)} = 1$.
Moreover, it satisfies the following asymptotic estimates:

\begin{enumerate}[label=\roman*.]
    \item   for $|x| \ge 1$,
\begin{align*}
	K_s(x) \lesssim_s e^{-|x|},
\end{align*}

\item for $|x| \leq 1$,
\begin{eqnarray}\label{grafokos_eq}
    K_s(x) \simeq_s \left\{\begin{array}{lr} |x|^{s-1} - C_s + O(|x|^{s+1}), & 0<s<1\\
    \log \frac{1 }{|x|} + c_s - O(|x|^{2-\epsilon}), & s = 1 \\
     1 - o(|x|), & s>1
    \end{array}\right.,
\end{eqnarray}
where $\epsilon$ is a small positive real number. $C_s$ and $c_s$ are some positive constants depending on $s$.

\item when $|x| \ge 1$, we can define $K_s'$ as the first derivative of $K_s$, and 
\begin{align*}
	K'_s(x) \lesssim_s |x|e^{-|x|}.
\end{align*}
\end{enumerate}

\end{lem}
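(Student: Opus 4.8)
The plan is to derive every assertion from the integral (subordination) representation, which I would obtain first. Applying the Laplace-transform identity $a^{-s/2}=\frac{1}{\Gamma(s/2)}\int_0^\infty e^{-ta}t^{s/2-1}\,\mathrm dt$ with $a=1+\xi^2$ inside the defining Fourier integral \eqref{kernel-Ks}, interchanging the order of integration (justified by absolute convergence once $x\neq0$, and by Tonelli for the nonnegative integrands), and evaluating the Gaussian integral $\int_{\mathbb R}e^{-t\xi^2+ix\xi}\,\mathrm d\xi=\sqrt{\pi/t}\,e^{-x^2/(4t)}$ produces the stated formula. From this representation the elementary properties are immediate: only $x^2$ appears, so $K_s$ is even; the integrand is strictly positive, so $K_s>0$; differentiation under the integral sign is legitimate on $\mathbb R\setminus\{0\}$, where the factor $e^{-x^2/(4t)}$ gives uniform control near $t=0$, yielding smoothness there; and since $\partial_x e^{-x^2/(4t)}=-\frac{x}{2t}e^{-x^2/(4t)}<0$ for $x>0$, monotonicity follows. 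For the normalisation I would integrate the representation in $x$, swap order, use $\int_{\mathbb R}e^{-x^2/(4t)}\,\mathrm dx=2\sqrt{\pi t}$, and recognise the remaining $t$-integral as $\Gamma(s/2)$, so $\int K_s=1$; positivity then upgrades this to $\|K_s\|_{L^1}=1$ (equivalently $\widehat{K_s}(0)=1$).

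For the behaviour at infinity I would use the algebraic identity $t+\frac{x^2}{4t}=|x|+\frac{(t-|x|/2)^2}{t}$, which factors out the decay: $K_s(x)=\frac{e^{-|x|}}{\sqrt{4\pi}\,\Gamma(s/2)}\int_0^\infty e^{-(t-|x|/2)^2/t}\,t^{(s-3)/2}\,\mathrm dt$. For $|x|\geq1$ the remaining integral is controlled by an algebraic factor in $|x|$ (seen by rescaling $t=\tfrac{|x|}{2}r$ or a Laplace estimate near $t=|x|/2$), which is absorbed into the $s$-dependent constant, giving $K_s(x)\lesssim_s e^{-|x|}$. The derivative estimate runs identically: differentiating yields $K_s'(x)=-\frac{x}{2\sqrt{4\pi}\,\Gamma(s/2)}\int_0^\infty e^{-t-x^2/(4t)}\,t^{(s-5)/2}\,\mathrm dt$, i.e. the prefactor $-x/2$ times the same kind of integral with $s$ replaced by $s-2$, so the previous bound gives $|K_s'(x)|\lesssim_s|x|\,e^{-|x|}$ for $|x|\geq1$.

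The trichotomy for $|x|\le1$ is where the real work lies, and it is governed entirely by the integrability of $t^{(s-3)/2}$ at the origin, whose threshold is exactly $s=1$. Writing $J(x)=\int_0^\infty e^{-t-x^2/(4t)}t^{(s-3)/2}\,\mathrm dt$, the exact rescaling $t=x^2/(4v)$ gives the self-reciprocal identity $J(x)=(|x|/2)^{s-1}\int_0^\infty e^{-v-x^2/(4v)}v^{-(s+1)/2}\,\mathrm dv$. For $s>1$ the original exponent exceeds $-1$, so $e^{-t}t^{(s-3)/2}$ dominates and dominated convergence gives $J(x)\to\Gamma(\tfrac{s-1}{2})$, a finite positive constant, with leading correction of order $|x|^{\min\{2,\,s-1\}}$ (which is the content of the third line; note that for $1<s<2$ this correction is in fact slower than linear). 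For $0<s<1$ the reciprocal integral tends to $\Gamma(\tfrac{1-s}{2})$, producing the dominant $|x|^{s-1}$ term, while its own leading correction is of order $|x|^{1-s}$, localised at $v\lesssim x^2$; against the $(|x|/2)^{s-1}$ prefactor this correction becomes precisely the constant $-C_s$, and the next order yields the $O(|x|^{s+1})$ error. For the borderline $s=1$ the exponent is exactly $-1$ and the logarithm emerges from $\int_{x^2}^{1}\frac{\mathrm dt}{t}\sim\log\frac{1}{|x|^2}$, giving the $\log\frac1{|x|}$ leading term, a constant $c_s$, and an $O(|x|^{2-\epsilon})$ error absorbing the $x^2\log\frac1{|x|}$ correction. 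An equivalent route is to quote the classical small-argument expansion of the modified Bessel function $G_{(1-s)/2}$ in the closed form of $K_s$ and match powers through $|x|^{(s-1)/2}G_{(1-s)/2}(|x|)$.

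I expect the main obstacle to be the regime $0<s<1$: cleanly separating the divergent $|x|^{s-1}$ contribution from the finite constant $-C_s$ amounts to a controlled renormalisation of a non-integrable integral, and one must verify that the remainder is genuinely $O(|x|^{s+1})$ rather than merely $o(1)$. The second delicate point is uniformity as $s\to1^\mp$, across the transition from algebraic to logarithmic blow-up; here the integral representation $J(x)$ is more transparent than the Bessel-function form, which is why I would base the argument on $J(x)$ and invoke the Bessel asymptotics only as a cross-check.
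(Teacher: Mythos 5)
Your subordination route is, in substance, exactly the proof behind the paper's citation: the paper offers no argument of its own for this lemma, deferring to Grafakos, and Grafakos's Proposition 1.2.5 derives precisely your representation $K_s(x)=\frac{1}{\sqrt{4\pi}\,\Gamma(s/2)}\int_0^\infty e^{-t-x^2/(4t)}t^{(s-3)/2}\,\mathrm dt$ and reads the kernel estimates off it. Your treatment of evenness, positivity, smoothness off the origin, monotonicity, the normalisation $\|K_s\|_{L^1}=1$, and the small-$x$ trichotomy via the rescaling $t=x^2/(4v)$ is sound; in particular the renormalisation producing the constant $-C_s$ for $0<s<1$ checks out, and you are right to flag that for $1<s<2$ the correction is $O(|x|^{s-1})$, so the "$1-o(|x|)$" in the third line of \eqref{grafokos_eq} is an overstatement of the lemma itself, not a defect of your argument.

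Two genuine gaps remain. First, your Fubini justification is wrong for $0<s\le 1$: the double integral is not absolutely convergent for \emph{any} $x$, since
\begin{align*}
\int_0^\infty\!\!\int_{\mathbb R} e^{-t}e^{-t\xi^2}t^{\frac s2-1}\,\mathrm d\xi\,\mathrm dt=\sqrt{\pi}\int_0^\infty e^{-t}t^{\frac{s-3}{2}}\,\mathrm dt=\infty \quad\text{for } s\le 1,
\end{align*}
the modulus of the integrand being independent of $x$, so "absolute convergence once $x\neq 0$" does not hold and Tonelli cannot be invoked in the direction you use it. The standard repair is to run the argument backwards: define $\widetilde K_s$ by the stated formula, note it is nonnegative and lies in $L^1(\mathbb R)$ for every $s>0$ (integrate in $x$ first, using $\int_{\mathbb R}e^{-x^2/(4t)}\,\mathrm dx=2\sqrt{\pi t}$), then compute its Fourier transform, where Fubini \emph{is} legitimate, obtain $(1+\xi^2)^{-s/2}$, and conclude $\widetilde K_s=K_s$ by uniqueness of Fourier transforms of tempered distributions; alternatively regularise with $e^{-\varepsilon\xi^2}$ and pass to the limit.

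Second, the claim that the algebraic factor in the large-$x$ estimate "is absorbed into the $s$-dependent constant" fails for large $s$. Your own Laplace estimate near $t=|x|/2$ (width $\sim\sqrt{|x|}$, height $\sim|x|^{(s-3)/2}$) gives the sharp asymptotics $K_s(x)\simeq_s |x|^{\frac s2-1}e^{-|x|}$ as $|x|\to\infty$; the polynomial factor is bounded on $|x|\ge 1$ only when $s\le 2$. For instance $K_4(x)=\tfrac14(1+|x|)e^{-|x|}$, so item (i) as stated is false for $s>2$, and by the same computation your derivative bound $|K_s'(x)|\lesssim_s|x|e^{-|x|}$ fails for $s>4$. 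Here the lemma itself misquotes the source: Grafakos proves $G_s(x)\le C_s e^{-|x|/2}$ for $|x|\ge 2$, with the halved exponent present precisely to swallow such polynomial growth. So either restrict (i) and (iii) to $s\le 2$ and $s\le 4$ respectively, or weaken the exponent to $e^{-|x|/2}$; everything downstream in the paper (periodised kernel sums, exponential decay of tails) only uses \emph{some} uniform exponential rate, so the weakened form suffices, but your proof as written cannot deliver the inequality in the form stated for all $s>0$.
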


\noindent The function $K_s(x)$ is completely monotone on $(0, \infty)$ for $0 < s \leq 2$, that is
\begin{align*}
	\left(-\frac{d}{dx}\right)^j K_s(x) > 0, \quad  \text{for} \quad j = 0, 1, 2, \dots .
\end{align*}
The above statement is proved for $s\in (0,1)$ in \cite{Orke-arxiv-22}, but following the Lemma 2.12 in \cite{ehrnstrom2019existence}, the result holds for all $s$ up to 2.
$K_s$ belongs to the Sobolev space $W^{\alpha,1}$ of absolute integrable functions $h(x)$ such that $\ft^{-1} \big[  (1 + \xi^2)^{\frac{\alpha}{2}} \hat h(\xi)  \big]$ are also absolutely integrable for all $\alpha < s$ .
{ More specifically, if $s > 1$, $K_s$ is in the space $W^{1,1}$, i.e., the first derivative (in the distributional sense) of absolute integrable functions is also absolutely integrable}.  If
 $1 < s < 2 $, $K_s$ is $\beta-$H\"older continuous for $\beta \in (0, s - 1)$, Lipschitz continuous when $s = 2$, and continuously differentiable if $s > 2$. 


We define periodised kernel $K_{s,P}$ as 
\begin{align}
	K_{s,P}(x) = \sum_{n\in \mathbb Z}K_{s}(x+nP), \quad P \in (0,\infty), \label{eq:periodisedKernel}
\end{align}
assuming that $K_{s,P} $ approaches $ K_s$ as $P \to \infty$. Since, $K_s$ is integrable with $\|K_s\|_{L^1(\mathbb R)} = 1$ and is monotonically decreasing in $(0,\infty)$, (\ref{eq:periodisedKernel}) is well defined and absolutely convergent. We can also define  $K_{s,P}$ in terms of Fourier series 
\begin{align*}
	K_{s,P}(x) = \sum_{k \in \mathbb Z}C_k \exp\left(\frac{2\pi i k x}{P} \right),
\end{align*}
where
\begin{align*}
 C_k = \frac{1}{P} \int_{0}^{P} K_{s,P}(x) \exp\left(\frac{- 2 \pi i k x}{P} \right) \,\mathrm dx.
\end{align*}
$C_k$ can be calculated by substituting $K_{s,P}$ from (\ref{eq:periodisedKernel}), and computing integral using change of variable. Then we can rewrite $K_{s,P}$ explicitly as
\begin{align}\label{Fourier-series}
	K_{s,P}(x) = \frac{1}{P} \sum_{k \in \mathbb Z} \left( 1 + \big(  \frac{2\pi k}{P}  \big)^2  \right)^{- \frac s2} \exp \left( \frac{2\pi i k x}{P} \right),
\end{align}
which converges absolutely for $s>1$.

The definition of $K_{s,P}$ allows it to inherit many properties from the convolution kernel $K_{s}$ , but restricted to a specific domain. The following lemma lists these properties:

\begin{lem}\label{lem-K-2}
$K_{s,P}$ is even, positive, periodic with period $P$, and strictly increasing in $(-P/2, 0)$. For $0 < s \leq 2$, $K_{s,P}$ is completely monotone on $(0, P/2)$ and {convex on $(0,P)$}. {For $ s > 1$, $K_{s,P}$ is in $W^{1,1}$, in particular, $K'_{s,P}$ is integrable. }
\end{lem}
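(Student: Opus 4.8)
The plan is to transfer each property of $K_{s,P}$ from the corresponding property of $K_s$ in Lemma~\ref{lem-K-1}, while keeping in mind that the periodisation turns global statements about $K_s$ into statements on a single period. I would first clear the elementary claims directly from \eqref{eq:periodisedKernel}: evenness follows by reindexing $n \mapsto -n$ together with evenness of $K_s$, positivity is immediate since every summand $K_s(x+nP)$ is positive, and $P$-periodicity comes from the shift $n \mapsto n+1$. These also supply the two reflection symmetries $K_{s,P}(-x) = K_{s,P}(x)$ and $K_{s,P}(P-x) = K_{s,P}(x)$, i.e. symmetry about $0$ and about $P/2$, which I will reuse.

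For strict monotonicity on $(-P/2,0)$, equivalently strict decrease on $(0,P/2)$ by evenness, and valid for all $s>0$, I would deliberately avoid a term-by-term argument, since $K_s$ need not be convex once $s>2$ (for instance $K_4(x)\simeq(1+|x|)e^{-|x|}$ is concave near the origin), so pairing the differentiated series does not have a fixed sign. Instead I would use the subordination (heat-kernel) representation already recorded in Lemma~\ref{lem-K-1}: with $g_t(x) = (4\pi t)^{-1/2}e^{-x^2/(4t)}$ and $w_s(t) = t^{(s-2)/2}e^{-t}/\Gamma(s/2) > 0$, that formula reads $K_s = \int_0^\infty w_s(t)\,g_t\,dt$. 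Periodising and interchanging sum and integral by Tonelli gives $K_{s,P}(x) = \int_0^\infty w_s(t)\,\Theta_t(x)\,dt$, where $\Theta_t(x) = \sum_{n} g_t(x+nP)$ is the periodised Gaussian. By Poisson summation $\Theta_t$ is a Jacobi theta function, and its triple-product expansion writes it, up to a positive $x$-independent factor, as a product of factors $1 + 2q^{2m-1}\cos(2\pi x/P) + q^{4m-2}$ with $q = e^{-(2\pi/P)^2 t}\in(0,1)$; each factor is strictly positive and strictly decreasing on $(0,P/2)$, hence so is $\Theta_t$. Differentiating under the integral (justified by Gaussian decay) then yields $K_{s,P}' < 0$ on $(0,P/2)$ for every $s>0$.

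For complete monotonicity on $(0,P/2)$ when $0 < s \le 2$ I would instead exploit the complete monotonicity of $K_s$ directly. Setting $h_j = (-1)^j K_s^{(j)} > 0$ on $(0,\infty)$, these are strictly decreasing because $h_j' = -h_{j+1} < 0$. Differentiating the series term-by-term (legitimate on compact subsets of $(0,P/2)$ by the exponential decay of $K_s$ and its derivatives from Lemma~\ref{lem-K-1}) and pairing the index $k\ge 0$ with $-(k+1)$, which together exhaust $\mathbb{Z}$, each pair equals $h_j(a) + (-1)^j h_j(b)$ with $a = x+kP$, $b = (k+1)P - x$ and $0 < a < b$, after using evenness. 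For even $j$ this is a sum of positives; for odd $j$ it is $h_j(a) - h_j(b) > 0$ by the strict decrease of $h_j$. Thus $(-1)^j K_{s,P}^{(j)} > 0$ on $(0,P/2)$ for all $j$. Convexity on $(0,P)$ is then the case $j=2$ on $(0,P/2)$, extended to $(P/2,P)$ by symmetry about $P/2$ and to the point $P/2$ by continuity of $K_{s,P}''$ there, the only singularities sitting at the lattice points $nP$.

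Finally, for $s>1$ the membership $K_{s,P}\in W^{1,1}$ of one period follows from $K_s\in W^{1,1}(\mathbb{R})$: since $K_s$ is then continuous, so is $K_{s,P}$, and unfolding the periodisation gives $\|K_{s,P}'\|_{L^1(0,P)} \le \sum_n \int_0^P |K_s'(x+nP)|\,dx = \|K_s'\|_{L^1(\mathbb{R})} < \infty$, so $K_{s,P}'$ is integrable. The main obstacle is the monotonicity in the range $s>2$: this is precisely where convexity of $K_s$ is lost, so a direct term-by-term argument fails and the passage through the subordination formula and the monotonicity of the periodised Gaussian becomes essential.
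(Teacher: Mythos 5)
Your proof is correct, and it takes a genuinely different route from the paper, which in fact prints no proof of this lemma at all: the paper simply asserts that $K_{s,P}$ inherits the listed properties from $K_s$ and defers to the periodic literature ({\O}rke for $s\in(0,1)$, Ehrnstr\"om--Johnson--Claassen and Ehrnstr\"om--M\ae hlen--Varholm for $s=1$, Le for $s>1$). Two things in your write-up go beyond that citation chain and are worth keeping. First, you correctly observe that the obvious termwise pairing $K_{s,P}'(x)=\sum_{k\ge0}\bigl(K_s'(x+kP)-K_s'((k+1)P-x)\bigr)$ only has a fixed sign when $K_s$ is convex on $(0,\infty)$, and this genuinely fails for $s>2$: e.g.\ $K_4(x)=\tfrac14(1+|x|)e^{-|x|}$ has $K_4''<0$ on $(0,1)$, and individual pairs can then be positive. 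Your detour through the subordination formula of Lemma~\ref{lem-K-1} and the Jacobi triple product for the periodised Gaussian settles strict monotonicity uniformly for all $s>0$; note that you do not even need to differentiate under the integral, since the pointwise monotonicity of each $\Theta_t$ together with positivity of $w_s$ already gives $K_{s,P}(x_1)>K_{s,P}(x_2)$ for $0<x_1<x_2<P/2$. Second, your pairing $h_j(a)+(-1)^jh_j(b)$ for complete monotonicity on $(0,P/2)$ when $0<s\le 2$, the reflection about $P/2$ for convexity on $(0,P)$, and the unfolding bound $\|K_{s,P}'\|_{L^1(0,P)}\le\|K_s'\|_{L^1(\mathbb R)}$ for $s>1$ are all correct, and they are precisely the ``inheritance'' arguments the paper leaves implicit.

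One small repair: you justify termwise differentiation of the periodised series by ``the exponential decay of $K_s$ and its derivatives from Lemma~\ref{lem-K-1}'', but that lemma only records decay of $K_s$ and $K_s'$, not of higher derivatives. For $j\ge 2$ in the range $0<s\le 2$ you can instead extract the needed bounds from complete monotonicity itself: $h_j=(-1)^jK_s^{(j)}$ is positive and decreasing, so $\sum_{n\ge1}h_j(a+nP)\le P^{-1}\int_a^\infty h_j(y)\,\mathrm dy\le P^{-1}h_{j-1}(a)<\infty$, and the reflected terms $n\le-1$ are controlled the same way after using evenness; this yields the locally uniform convergence of the differentiated series that the induction on $j$ requires. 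With that substitution your argument is complete.
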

{\theoremstyle{remark}
\begin{remark}\label{Remark1}
\noindent The behavior of $K_{s,P}$ near the origin is similar to that of $K_{s}$  (Lemma \ref{lem-K-1}(\textit{ii.})). If we split $K_{s,P}$ in its regular and singular parts as $K_{s,P} = R_{s,P} + S_{s,P}$, where $R_{s,P}$ is a real analytic function in $(-P/2,P/2)$ and $S_{s,P}$ has the singularity at the origin, then for $s > 0$ and $|x| \leq 1$, we have
\begin{eqnarray*}
    S_{s,P}(x) \simeq_s \left\{\begin{array}{lr} |x|^{s-1}, & s \neq 2k-1\\
    |x|^{s-1}\log \frac{1}{|x|}, & s = 2k-1 
    \end{array}\right. , \quad k \in \Z^+ .
\end{eqnarray*}
\end{remark}
}
The following results on $\Lambda^{-s}$ are direct consequences of the fact that the kernels $K_{s}$ and $K_{s,P}$ are even and positive on the real line, and therefore on the interval $(-P/2,P/2)$ for $P>0$. 

\begin{prop} \label{Prop_bessel_2}
   (i) The operator $\Lambda^{-s}$ is parity preserving and strictly monotone on $C(\R)$.

 \noindent (ii) Let a function $f$ is continuous, odd, $P$-periodic, and non-negative on $(-P/2,0)$ such that for some $\bar x \in (-P/2,0)$, $f(\bar x)$ is positive, then $\Lambda^{-s}f>0$ on $(-P/2,0)$.
\end{prop}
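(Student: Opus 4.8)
The plan is to express both assertions through the convolution representations $\Lambda^{-s}f=K_s*f$ and, in the periodic case, $\Lambda^{-s}f=K_{s,P}*f$, and then to invoke the evenness and strict positivity of the two kernels recorded in Lemmas~\ref{lem-K-1} and~\ref{lem-K-2}. Since $K_s\in L^1(\R)$ with $\|K_s\|_{L^1(\R)}=1$, the convolution is well defined and continuous for the bounded (periodic) functions under consideration, so no integrability issue arises.

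For part (i), parity preservation is a single change of variables. Writing $(\Lambda^{-s}f)(-x)=\int_{\R}K_s(-x-y)f(y)\dy$ and substituting $y\mapsto -y$, the evenness of $K_s$ turns the integral into $\int_{\R}K_s(x-y)f(-y)\dy$; this equals $+(\Lambda^{-s}f)(x)$ when $f$ is even and $-(\Lambda^{-s}f)(x)$ when $f$ is odd. For strict monotonicity, suppose $f\le g$ with $f\not\equiv g$ and set $h=g-f\ge 0$. By continuity $h>0$ on some open interval $U$, so for every $x$,
\[
(\Lambda^{-s}g-\Lambda^{-s}f)(x)=\int_{\R}K_s(x-y)\,h(y)\dy\ge\int_{U}K_s(x-y)\,h(y)\dy>0,
\]
because $K_s>0$ everywhere; hence $\Lambda^{-s}f<\Lambda^{-s}g$ pointwise.

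For part (ii), I would first fold the periodic convolution onto the half-period. Splitting $\int_{-P/2}^{P/2}$ at $0$ and substituting $y\mapsto -y$ on $(0,P/2)$, the oddness of $f$ gives
\[
(\Lambda^{-s}f)(x)=\int_{-P/2}^{0}\bigl[K_{s,P}(x-y)-K_{s,P}(x+y)\bigr]f(y)\dy .
\]
Since $f\ge 0$ on $(-P/2,0)$ and, by continuity, $f>0$ on a neighbourhood of $\bar x$, the claim reduces to showing that the bracketed kernel difference is nonnegative on $(-P/2,0)\times(-P/2,0)$ and strictly positive there.

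The hard part is exactly this pointwise kernel comparison $K_{s,P}(x-y)>K_{s,P}(x+y)$ for $x,y\in(-P/2,0)$. Here I would use that, by Lemma~\ref{lem-K-2}, $K_{s,P}$ is even, $P$-periodic, and strictly increasing on $(-P/2,0)$, so it equals a strictly decreasing function of the distance $d(t)$ from $t$ to the nearest multiple of $P$, with $d(t)\in[0,P/2]$. The comparison is therefore equivalent to $d(x-y)<d(x+y)$. Writing $a=-x$ and $b=-y$ with $a,b\in(0,P/2)$, one has $x-y=b-a\in(-P/2,P/2)$, so $d(x-y)=|b-a|$, whereas $x+y=-(a+b)\in(-P,0)$, so $d(x+y)=\min(a+b,\,P-(a+b))$. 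The inequality $|b-a|<a+b$ is immediate since $a,b>0$, and $|b-a|<P-(a+b)$ reduces to $2\max(a,b)<P$, which holds because $a,b<P/2$. Thus the difference is strictly positive throughout $(-P/2,0)\times(-P/2,0)$, the integrand above is nonnegative and positive near $\bar x$, and $\Lambda^{-s}f>0$ on $(-P/2,0)$ follows.
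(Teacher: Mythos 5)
Your proposal is correct, and it is in fact more complete than what the paper offers: the paper states Proposition~\ref{Prop_bessel_2} with no proof, only the preceding remark that the results are ``direct consequences'' of the evenness and positivity of $K_s$ and $K_{s,P}$. Your part (i) is exactly that remark made precise. For part (ii), you correctly identify that evenness and positivity alone do \emph{not} suffice: after folding the periodic convolution to $\int_{-P/2}^{0}\bigl[K_{s,P}(x-y)-K_{s,P}(x+y)\bigr]f(y)\,\mathrm{d}y$, the sign of the bracket requires the strict monotonicity of $K_{s,P}$ on $(-P/2,0)$ from Lemma~\ref{lem-K-2}, and your reduction of the comparison $K_{s,P}(x-y)>K_{s,P}(x+y)$ to the distance inequality $d(x-y)<d(x+y)$ modulo $P\mathbb{Z}$, verified via $|b-a|<a+b$ and $2\max(a,b)<P$, is precisely the missing detail. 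Notably, this is the same kernel inequality the paper later invokes without proof in Theorem~\ref{prop-delta} (``By Lemma~\ref{lem-K-1} and Lemma~\ref{lem-K-2}, $K_{s,P}(y-x)-K_{s,P}(x+y)>0$''), so your computation also fills that gap. Two harmless caveats you handle implicitly but could flag: for $s\leq 1$ the integrand may be $+\infty$ on the diagonal $y=x$, which is a null set and does not affect positivity or convergence since $K_{s,P}$ is integrable and $f$ bounded; and ``strictly monotone on $C(\R)$'' should be read on \emph{bounded} continuous functions (or against the exponential decay of $K_s$) for the convolution to converge, which matches the paper's usage.
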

The operator $\Lambda^{-s}$ is a classical symbol of order $-s$, that is, it satisfies the following inequality
    \begin{align*}
	  | D^k_\xi(1 + \xi^2)^{-\frac{s}{2}} | \lesssim_{s,k} (1 + |\xi|)^{-s-k}. 
\end{align*} 

\noindent Let $B^{m}_{p,r}(\mathbb R)$ represents a class of Besov spaces with $m \in \mathbb R $, $1 \le p,r \le \infty$. 
{According to \cite[Sec. 2.5]{MR2768550}, $L^\infty(\mathbb R)$ is embedded in $B^{0}_{\infty,\infty}(\mathbb R)$.} 
Moreover, 
$B^{s}_{\infty,\infty}(\mathbb R)$ are equivalent to the Zygmund spaces $\mathcal{C}^{s}(\mathbb R)$ 
of bounded functions {for any positive real number $s$}. 
$\mathcal{C}^{s}(\mathbb R)$ are equivalent to the H\"{o}lder spaces $C^{[s],s-[s]}$ for $s \in \R^+\backslash \N$ and are thus often referred to as the H\"{o}lder-Zygmund spaces. Additionally,  $\mathcal{C}^{s}(\mathbb R)$ are continuously embedded in the H\"{o}lder spaces $C^{\alpha}$ for all $\alpha < s$.
Here, $[s]$ represents the integer part of $s$ such that $[s] < s$. 

We have a following nice result for $\Lambda^{-s}$ on the spaces $B^{m}_{p,r}(\mathbb R)$.

\begin{prop} \label{Prop_bessel_new} \cite[Prop. 2.78]{MR2768550}
    The operator $\Lambda^{-s}$ is continuous from $B^m_{p,r}(\mathbb R)$ to $B^{m+s}_{p,r}(\mathbb R)$ for $s>0$, $m \in \mathbb R$, and $1 \le p,r \le \infty$. 
\end{prop}
%
\noindent A particular case of the above proposition, which we use later, can be stated as: ``The operator $\Lambda^{-s}$ is bounded from $L^\infty(\mathbb R)$ to ${C}^{\alpha}(\mathbb R)$ for some $\alpha \in (0,1)$ less than $s >0$.''


\section{Properties of solutions} \label{Solution_properties}

This section contains a set of a priori properties associated with the travelling wave solution (say $\varphi$) to \eqref{eq:steady}. These properties will be used in the subsequent section, particularly for constructing solitary waves as the period $P$ tends towards infinity. 
Our approach closely follows that of Ehrnstr\"{o}m et al. \cite{MR4531652}. Most proofs are consistent with those in their work, given the shared common properties between $K$ in \cite{MR4531652} and $K_s$ in our study. For conciseness, several proofs are omitted here and will only be provided when they differ from those in \cite{MR4531652}.

\begin{prop}\label{prop-Phi} \cite{MR4531652}
	Let $\varphi$ be a bounded solution of \eqref{eq:steady} and $\mu \ge 0$ be the wave speed such that $\lim_{x \to \infty} \varphi(x) = \gamma,$
then $\gamma$ will also be the solution of \eqref{eq:steady} with the wave speed $\mu$, and attains the value either $\mu - 1$ or $0$.
\end{prop}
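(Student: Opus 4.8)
The plan is to pass to the limit $x \to \infty$ in the steady equation \eqref{eq:steady} and to exploit the fact that constants are fixed points of $\Lambda^{-s}$. Indeed, since $K_s \ge 0$ and $\|K_s\|_{L^1(\mathbb{R})} = 1$ (Lemma \ref{lem-K-1}), for any constant $c$ one has $\Lambda^{-s} c = c \int_{\mathbb{R}} K_s = c$. Hence the constant function $\gamma$ solves \eqref{eq:steady} precisely when $-\mu\gamma + \gamma^2 + \gamma = 0$, i.e.\ when $\gamma(\gamma + 1 - \mu) = 0$, which forces $\gamma = 0$ or $\gamma = \mu - 1$. Both assertions of the proposition therefore follow at once if we can show that the nonlocal term $(\Lambda^{-s}\varphi)(x)$ converges to $\gamma$ as $x \to \infty$: the first two terms $-\mu\varphi(x) + \varphi(x)^2$ then converge to $-\mu\gamma + \gamma^2$ by hypothesis, and the whole left-hand side of \eqref{eq:steady} passes to the limit.

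The one genuinely analytic step is thus the convergence of $(\Lambda^{-s}\varphi)(x)$. Writing the operator as a convolution and changing variables, and using $\int_{\mathbb{R}} K_s = 1$ to subtract off $\gamma$, gives
\begin{align*}
(\Lambda^{-s}\varphi)(x) - \gamma = \int_{\mathbb{R}} K_s(y)\big(\varphi(x - y) - \gamma\big)\,\mathrm{d}y.
\end{align*}
For each fixed $y$ we have $\varphi(x-y) \to \gamma$ as $x \to \infty$ by assumption, so the integrand tends to $0$ pointwise; and since $\varphi$ is bounded, the integrand is dominated by $2\|\varphi\|_{L^\infty(\mathbb{R})}\, K_s(y)$, which is integrable. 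By dominated convergence the integral tends to $0$, that is, $(\Lambda^{-s}\varphi)(x) \to \gamma$.

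Combining the two observations and letting $x \to \infty$ in \eqref{eq:steady} yields $-\mu\gamma + \gamma^2 + \gamma = 0$, which establishes both that the constant $\gamma$ is a solution of \eqref{eq:steady} and that $\gamma \in \{0,\, \mu - 1\}$. I do not expect any real obstacle here: the only point requiring care is the interchange of limit and integral, and the boundedness of $\varphi$ is exactly what supplies the integrable dominating function $2\|\varphi\|_{L^\infty(\mathbb{R})} K_s$. As an alternative to invoking dominated convergence directly, one could split the integral over $\{|y| \le R\}$ and $\{|y| > R\}$ and control the far part by the integrable (indeed exponentially decaying) tail of $K_s$ from Lemma \ref{lem-K-1}, which makes the uniformity entirely explicit.
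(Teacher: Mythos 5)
Your proof is correct and takes essentially the same approach as the source: the paper itself states this result without proof, citing \cite{MR4531652}, where the argument is precisely yours---write $(\Lambda^{-s}\varphi)(x)-\gamma=\int_{\mathbb{R}}K_s(y)\bigl(\varphi(x-y)-\gamma\bigr)\,\mathrm{d}y$ and use dominated convergence (justified by $K_s\ge 0$, $\|K_s\|_{L^1(\mathbb{R})}=1$ and the bound $2\|\varphi\|_{L^\infty(\mathbb{R})}K_s$) to pass to the limit in \eqref{eq:steady}. The limiting constant equation $\gamma(\gamma+1-\mu)=0$ then yields $\gamma\in\{0,\mu-1\}$ exactly as claimed, so there is nothing to add.
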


\begin{prop}\label{prop-L2-varphi}
The solution $\varphi \in L^1(-\frac{P}{2}, \frac{P}{2})$ of the steady equation \eqref{eq:steady} belongs to $L^2(-\frac{P}{2}, \frac{P}{2})$ for any $\mu \in \mathbb R$,  $P > 0$ as
\begin{align*}
 (\mu - 1) \int_{-\frac{P}{2}}^{\frac{P}{2}}
 \varphi(x) \, \mathrm dx 
 = \|\varphi \|^{2}_{L^2(-\frac P2, \frac P2)}.
 \end{align*}
Moreover, $\varphi$ has a negative mean if $\mu < 1$ and a positive mean if $\mu > 1$.
\end{prop}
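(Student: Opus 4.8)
The plan is to extract the stated identity by integrating the steady equation \eqref{eq:steady} over one period, and to obtain the $L^2$ membership by a short bootstrap that upgrades the assumed $L^1$ regularity. Throughout I would regard $\varphi$ as a $P$-periodic function and use that on such functions $\Lambda^{-s}\varphi$ coincides with the periodic convolution $K_{s,P}*\varphi$, where $K_{s,P}$ is the periodised kernel from \eqref{eq:periodisedKernel}.

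First I would establish the bootstrap. Rewriting \eqref{eq:steady} as $\varphi^2 = \mu\varphi - \Lambda^{-s}\varphi$, it suffices to show that the right-hand side lies in $L^1(-\tfrac{P}{2},\tfrac{P}{2})$. The term $\mu\varphi$ is integrable by hypothesis, and for the nonlocal term Young's inequality for periodic convolutions gives $\|K_{s,P}*\varphi\|_{L^1} \le \|K_{s,P}\|_{L^1(-P/2,P/2)}\,\|\varphi\|_{L^1(-P/2,P/2)}$. By Lemma \ref{lem-K-1} the kernel has unit mass over a full period, so $\|K_{s,P}\|_{L^1(-P/2,P/2)} = \|K_s\|_{L^1(\mathbb R)} = 1$. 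Hence $\Lambda^{-s}\varphi \in L^1$, and therefore $\varphi^2 \in L^1$, i.e. $\varphi \in L^2(-\tfrac{P}{2},\tfrac{P}{2})$.

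Next I would integrate \eqref{eq:steady} over $(-\tfrac{P}{2},\tfrac{P}{2})$. The only nonroutine point is the identity $\int_{-P/2}^{P/2}\Lambda^{-s}\varphi\,\mathrm dx = \int_{-P/2}^{P/2}\varphi\,\mathrm dx$, which I would justify by Fubini (licensed by the integrability just established) together with the mass normalisation: writing out the convolution and swapping the order of integration, the inner integral $\int_{-P/2}^{P/2}K_{s,P}(x-y)\,\mathrm dx$ equals the integral of $K_{s,P}$ over one full period, namely $\int_{\mathbb R}K_s = 1$. Substituting this into the integrated equation yields $-\mu\int\varphi + \int\varphi^2 + \int\varphi = 0$, that is $\|\varphi\|_{L^2(-P/2,P/2)}^2 = (\mu-1)\int_{-P/2}^{P/2}\varphi\,\mathrm dx$, as claimed.

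Finally, the sign of the mean follows at once: for a nontrivial solution $\|\varphi\|_{L^2}^2 > 0$, so $(\mu-1)\int\varphi > 0$, forcing $\int\varphi < 0$ when $\mu < 1$ and $\int\varphi > 0$ when $\mu > 1$. I expect the main (though mild) obstacle to be the careful justification of the mass-preservation identity for $\Lambda^{-s}$ on the periodic domain — invoking Young's inequality and Fubini for the periodised kernel, which carries an integrable singularity at the origin when $s \le 1$ (cf.\ Remark \ref{Remark1}) — rather than any deep analytic difficulty.
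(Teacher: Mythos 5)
Your proposal is correct and takes essentially the same route as the paper, whose entire proof is the one-line remark that the identity follows by integrating the steady equation \eqref{eq:steady} from $-\frac{P}{2}$ to $\frac{P}{2}$. You simply make explicit the details the paper leaves implicit --- the $L^1$ bootstrap giving $\varphi^2 \in L^1$, and the Fubini/mass-normalisation step $\int_{-P/2}^{P/2} K_{s,P}(x-y)\,\mathrm{d}x = \|K_s\|_{L^1(\mathbb{R})} = 1$ justifying $\int \Lambda^{-s}\varphi = \int \varphi$ over a period --- which is a sound and complete elaboration of the same argument.
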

\noindent The above result can be proved by integrating the steady equation \eqref{eq:steady} from $-\frac P2$ to $\frac P2$.

Next proposition predicts the range of the wave speed $\mu$ for the travelling-wave solution to be non-negative and non-decreasing. The proof is similar to the one given in  \cite{MR4531652}, however, we present a simpler version here.

\begin{prop}\label{Mu_ge_1}	
Let $\varphi$ and $\mu$ be same as Prop. \ref{prop-Phi} . Additionally, if $\varphi$ is even, non-negative, non-constant, and non-decreasing on $(-\infty, 0)$, then $\mu \ge 1$.
\end{prop}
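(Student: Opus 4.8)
The plan is to read off the possible tail value $\gamma$ from Proposition \ref{prop-Phi} and then dispatch the two resulting cases separately. Rewriting the steady equation \eqref{eq:steady} as
\begin{equation*}
\Lambda^{-s}\varphi = \varphi(\mu - \varphi),
\end{equation*}
the first observation is that $\gamma \ge 0$: since $\gamma = \lim_{x\to\infty}\varphi(x)$ is a limit of values of the non-negative function $\varphi$, it is itself non-negative (and, by evenness together with monotonicity on $(-\infty,0)$, it is in fact the infimum of $\varphi$). Proposition \ref{prop-Phi} then forces $\gamma$ to be one of the two constant equilibria, so either $\gamma = 0$ or $\gamma = \mu - 1$. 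As a preliminary, I would also record that $\varphi > 0$ everywhere: if $\varphi(x_0)=0$ at some point, then $\Lambda^{-s}\varphi(x_0)=\int K_s(x_0-y)\varphi(y)\dy>0$ by positivity of $K_s$ (Lemma \ref{lem-K-1}) and non-triviality of $\varphi$, contradicting $\Lambda^{-s}\varphi(x_0)=\varphi(x_0)(\mu-\varphi(x_0))=0$; this guarantees the strict positivity of the integrals appearing below.

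In the case $\gamma = \mu - 1$ there is nothing more to do: from $\gamma \ge 0$ we get $\mu = \gamma + 1 \ge 1$ at once.

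It remains to handle the case $\gamma = 0$, where $\varphi$ is a localised (solitary) profile. Here I would integrate the steady equation over the whole line. Since $\|K_s\|_{L^1(\R)} = 1$ (Lemma \ref{lem-K-1}), the operator $\Lambda^{-s}$ preserves mass, $\int_\R \Lambda^{-s}\varphi \dx = \int_\R \varphi \dx$, and integrating $\Lambda^{-s}\varphi = \mu\varphi - \varphi^2$ yields the whole-line analogue of Proposition \ref{prop-L2-varphi},
\begin{equation*}
(\mu - 1)\int_\R \varphi \dx = \int_\R \varphi^2 \dx.
\end{equation*}
Because $\varphi$ is non-negative and non-constant, both integrals are strictly positive, and therefore $\mu > 1$. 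Combining the two cases gives $\mu \ge 1$, as claimed.

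The step that needs the most care — and the only genuine obstacle — is justifying the integral identity in the case $\gamma = 0$, i.e.\ confirming that $\varphi \in L^1(\R)\cap L^2(\R)$ so that $\int_\R\varphi$ and $\int_\R\varphi^2$ converge and Fubini applies to $\int_\R \Lambda^{-s}\varphi = \int_\R K_s*\varphi$. Boundedness and monotone decay to $0$ do not by themselves yield integrability, so one needs a quantitative decay estimate for the tail; this is supplied by the exponential decay established later for the constructed waves (or, a priori, by the linear tail analysis through the symbol $(1+\xi^2)^{-s/2}$, which pins the admissible decay rate and is the mechanism behind $\mu>1$). If one prefers to sidestep decay entirely, the same conclusion follows for the periodic approximants by applying Proposition \ref{prop-L2-varphi} on the compact period $(-P/2,P/2)$, where no integrability issue arises, and then passing to the limit $P\to\infty$ during the construction in Section \ref{sec-construction}.
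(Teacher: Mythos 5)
Your reduction via Proposition \ref{prop-Phi} is fine, and the case $\gamma=\mu-1$ is disposed of correctly (non-negativity of the limit gives $\mu\ge 1$ at once); the paper's proof makes the same first move in contrapositive form. The genuine gap is the case $\gamma=0$, which is the entire content of the proposition, and there your argument is incomplete. The identity $(\mu-1)\int_\R\varphi\dx=\int_\R\varphi^2\dx$ does require $\varphi\in L^1(\R)\cap L^2(\R)$, as you acknowledge, but neither of your proposed patches supplies this. Invoking the exponential decay of Section 4 is circular: Theorem \ref{Mathias_theorem} assumes the symbol is bounded above by $\mu$, and since $m(0)=1$ this presupposes $\mu\ge 1$ (indeed $\mu>1$ is needed for $\delta_{\mu,s}$ to exist); moreover, Proposition \ref{Mu_ge_1} must hold for an \emph{arbitrary} bounded solution with a tail limit, since it is used inside the proof of Theorem \ref{main-thm} long before any decay information is available. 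The periodic fallback also does not prove the statement: the proposition is not about limits of periodic waves, and in any case the relevant limit $\phi_\lambda$ has tails converging to $\vartheta_\lambda-1<0$, so it is not integrable and both sides of the periodic identity diverge as $P\to\infty$. Note that boundedness plus monotone decay to zero allows arbitrarily slow decay (e.g. like $1/\log|x|$), so integrability is a real assumption here, not a consequence of the hypotheses.

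The paper avoids integrability altogether with a pointwise contradiction argument: assuming $\mu<1$ (which forces $\gamma=0$, as you note), one chooses $\sigma>0$ with $\tfrac12>\int_0^\sigma K_s\dx>\mu-\tfrac12+\varepsilon$, uses the convolution form of \eqref{eq:steady} together with the positivity of $K_s$ and the monotonicity of $\varphi$ to obtain, for $x<0$, the pointwise bound $\mu-\varphi(x)\ge\int_0^{-2x}K_s(y)\dy+\int_0^\sigma K_s(z)\,\mathrm{d}z$ (your observation that $\varphi>0$ everywhere, needed to divide by $\varphi(x)$, is correct and is exactly what legitimises this step), and then sends $x\to-\infty$ along a sequence to get $\mu\ge\tfrac12+\bigl(\mu-\tfrac12+\varepsilon\bigr)$, a contradiction. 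If you wish to keep your integration strategy, it can be repaired without any decay rate: either integrate the equation over $[-R,R]$ and show that the leakage term $\int_{-R}^R(\varphi-K_s*\varphi)\dx$ tends to $0$ as $R\to\infty$ (using $\|K_s\|_{L^1(\R)}=1$, the exponential tail and finite first moment of $K_s$, and $\varphi(x)\to0$), so that $(1-\mu)\int_{-R}^R\varphi\dx+\int_{-R}^R\varphi^2\dx\to0$ forces $\mu\ge1$; or invoke Corollary \ref{cor-1}, which the paper includes precisely to handle solutions that may fail to be in $L^1(\R)$: for $\mu<1$ the function $\varphi\bigl((\mu-1)-\varphi\bigr)$ is non-positive and strictly negative on an interval, so the test-function integral in that corollary could not vanish. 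As written, however, the main case of your proof does not close.
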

\begin{proof}

We prove this proposition by contradiction.		
Assume that $\mu < 1$. Prop. \eqref{prop-Phi} promptly gives us  
   $\gamma = \lim_{x \to -\infty} \varphi(x) = 0$,   
as the assumption $\mu - 1 < 0$ shows that the other possible value  of $\gamma = \mu - 1$ can not hold due to the non-negativity of $\varphi$.
Now, from \eqref{eq:steady}, we have
\begin{align*}
	(\mu - \varphi)\varphi = \Lambda^{-s} \varphi.
\end{align*}
For $x < 0$, the convolution formula gives
\begin{align}
	\big( \mu - \varphi(x)  \big) \varphi(x) 
 =& \int_{\mathbb R} K_s(x - y) \varphi(y) \, \mathrm dy \nonumber\\
=& \int_{\{|y| < |x|\}} K_s(x - y) \varphi(y) \, \mathrm dy 
+ \int_{\{|y| \ge |x|\}} K_s(x - y) \varphi(y) \, \mathrm dy \label{A1 plus A2}
\end{align}
For the first integral of \eqref{A1 plus A2}, since $\varphi$ is non-decreasing on $(-\infty, 0)$, non-negative, and even; and $K_s$ is positive on $\mathbb R \backslash \{0\}$, we get
\begin{align}
\int_{x}^{-x} K_s(x - y) \varphi(y) \, \mathrm dy  & 
	 \ge \varphi(x) \int_0^{-2x} K_s(y) \, \mathrm dy. \label{A1-temp}
\end{align}
For the second integral, choose a variable $\sigma > 0$ such that
\begin{align}
	\frac 12 > \int^\sigma_0 K_s(x)\, \mathrm dx 
 > \mu - \frac 12 + \varepsilon,\quad \varepsilon 
 > 0.\label{s-1}
\end{align}
Again, since $\varphi$ is non-negative and $K_s$ is even and positive, we get
\begin{align}
 \int_{\{|y| \ge |x|\}} K_s(x - y) \varphi(y) \, \mathrm dy 
&\ge 
\int^{x + \sigma}_{x} K_s(x - y) \varphi(y) \, \mathrm dy. \label{A2-temp}
\end{align}
Thus, substituting \eqref{A1-temp} and \eqref{A2-temp} into \eqref{A1 plus A2}, we have
\begin{align*}
 \big( \mu - \varphi(x) \big) \varphi(x) \ge \varphi(x) \int_0^{-2x} K_s(y)\, \mathrm dy + \int^{x+\sigma}_{x} K_s(x-y) \varphi(y)\, \mathrm dy,
\end{align*}
which we can rewrite using monotonicity of $\varphi$ as 
\begin{align*}
\big(   \mu - \varphi(x)   \big) \varphi(x)
& \ge \varphi(x)  \int_0^{-2x} K_s(y)\, \mathrm dy  + \varphi(x) \int^{\sigma}_0 K_s(z) \, \mathrm d z,\\
\implies
 \mu - \varphi(x) 
& \ge  \int_0^{-2x} K_s(y)\, \mathrm dy  +  \int^{\sigma}_0 K_s(z) \, \mathrm d z.
\end{align*}

Now, assume $h(x) := \int_0^{-2x} K_s(y)\, \mathrm dy \leq \frac{1}{2}$ as $\|K_s\|_{L^1(\mathbb R)} = 1$, then there exists a sequence of negative real numbers $\{x_n\}_{n\in \mathbb N}$ with $\lim_{n \to \infty} x_n = - \infty $ such that $ h(x_n) \to \frac{1}{2}$ and $ \phi(x_n) \to 0$ as $n \to \infty$, it follows that
{\begin{align*}
  \mu - \varphi(x_n)  & \ge  h(x_n)  + \big( \mu - \frac 12 + \varepsilon \big)
  \implies \mu   \ge  \frac 12  + \big( \mu - \frac 12 + \varepsilon \big),
\end{align*}
which is a contradiction to itself.} Hence, $\mu \ge 1$ under the assumed conditions.
\end{proof}

The following corollary addresses a challenge arising when examining solutions that may not belong to $L^1(\mathbb{R})$, and offers insight into solutions of \eqref{eq:steady} under certain conditions, establishing a relationship between the solution $\varphi$ and the behavior of convolutions involving $\varphi$ and specific test functions. 

\begin{cor}\label{cor-1} \cite{MR4531652}
Let $\varphi \in L^\infty (\mathbb{R})$ be a solution of \eqref{eq:steady} with a wave speed $\mu \geq 0$, where $\varphi$ is non-negative and possesses a limit as $x$ tends to positive or negative infinity. Then, for every nonzero smooth and compactly supported test function $\psi$ with $\psi \geq 0$, the integral
\begin{align*}
\int_{\mathbb{R}} \psi *
\big( \varphi((\mu -1) - \varphi) \big) \mathrm{d}x = 0,
\end{align*}
holds. Specifically, this implies that only solutions that vanish everywhere have a unit speed.
\end{cor}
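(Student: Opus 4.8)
The plan is to turn the unknown integrand into the convolution of $\varphi$ with an explicit, \emph{integrable} kernel having vanishing zeroth and first moments, and then to integrate by parts at the level of that kernel so that only the (assumed) limits of $\varphi$ at $\pm\infty$ enter. First I would use the steady equation to eliminate the quadratic term: since $\varphi\in L^\infty(\R)$ and $K_s\in L^1(\R)$, the Fourier multiplier satisfies $\Lambda^{-s}\varphi=K_s*\varphi$ pointwise, and rewriting \eqref{eq:steady} as $\varphi^2=\mu\varphi-\Lambda^{-s}\varphi$ gives $\varphi\big((\mu-1)-\varphi\big)=\Lambda^{-s}\varphi-\varphi=K_s*\varphi-\varphi$. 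Hence, by associativity and commutativity of convolution, $\psi*\big(\varphi((\mu-1)-\varphi)\big)=\eta*\varphi$, where $\eta:=K_s*\psi-\psi$. The virtue of this step is that $\eta$ is very well behaved even though $\varphi$ need not decay: it lies in $L^1(\R)$ and decays exponentially (from the tail bound in Lemma \ref{lem-K-1}(i) together with the compact support of $\psi$), it has zero mean because $\|K_s\|_{L^1(\R)}=1$, and — crucially — it has vanishing first moment $\int_\R x\,\eta(x)\,\mathrm dx=0$, which follows from the evenness of $K_s$ (so that $\int u K_s(u)\,\mathrm du=0$) combined with $\int_\R K_s=1$.

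Next I would introduce the antiderivative $\Theta(x):=\int_{-\infty}^x\eta(t)\,\mathrm dt$. The zero-mean property makes $\Theta$ decay exponentially at both ends, hence $\Theta\in L^1(\R)$, while an integration by parts turns the vanishing first moment of $\eta$ into $\int_\R\Theta\,\mathrm dx=-\int_\R x\,\eta\,\mathrm dx=0$. Because $\Theta$ is absolutely continuous with $\Theta'=\eta$ and $\varphi$ is bounded, one may differentiate under the integral sign to obtain $\eta*\varphi=(\Theta*\varphi)'$. Integrating over a symmetric interval then gives $\int_{-R}^{R}\eta*\varphi\,\mathrm dx=(\Theta*\varphi)(R)-(\Theta*\varphi)(-R)$, and letting $R\to\infty$, dominated convergence (using $\Theta\in L^1$ and the existence of the limits $\gamma_\pm=\lim_{x\to\pm\infty}\varphi$) yields $(\Theta*\varphi)(\pm R)\to\gamma_\pm\int_\R\Theta=0$. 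This establishes the claimed identity.

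For the unit-speed statement I would specialise to $\mu=1$, where the integrand becomes $\psi*(-\varphi^2)$. Since $\psi\ge0$ and $\varphi^2\ge0$, the function $\psi*\varphi^2$ is non-negative, so the vanishing of its integral forces $\psi*\varphi^2\equiv0$; as $\psi$ is nonzero and non-negative, hence positive on an open interval, this is possible only if $\varphi\equiv0$.

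The main obstacle is precisely the failure of integrability of $\varphi$ flagged before the statement: one cannot simply invoke $\int(f*g)=\int f\int g$. The mechanism that rescues the argument is that $\eta$ carries \emph{two} vanishing moments rather than one. Zero mean is what makes $\Theta$ integrable, and what makes the boundary terms converge to quantities of the form $\gamma_\pm\int_\R\Theta$ instead of diverging; the vanishing first moment is exactly what forces $\int_\R\Theta=0$ and so cancels those boundary terms even when $\gamma_+\neq\gamma_-$. Verifying these two moment identities, and justifying the differentiation and the limit passage by dominated convergence using only the boundedness and the existence of one-sided limits of $\varphi$, is where the care is needed.
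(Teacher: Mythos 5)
Your argument is correct, and since the paper itself states this corollary without proof (it is imported from \cite{MR4531652}), the comparison is against the cited argument rather than anything in this text. Your first step --- using \eqref{eq:steady} to rewrite $\varphi((\mu-1)-\varphi)=K_s*\varphi-\varphi$ and then passing the kernel onto the test function, so that the integrand becomes $\eta*\varphi$ with $\eta=K_s*\psi-\psi$ --- is exactly the mechanism the cited construction relies on, exploiting $\|K_s\|_{L^1(\R)}=1$ and the existence of the limits of $\varphi$. What you add is a clean, self-contained way to finish: the antiderivative $\Theta$ of $\eta$ is exponentially decaying because $\eta$ has zero mean, and the vanishing \emph{first} moment of $\eta$ (from the evenness of $K_s$, which is legitimate here since $xK_s(x)$ is integrable: $K_s$ has exponential tails by Lemma \ref{lem-K-1}(i) and an integrable singularity at the origin in all regimes of \eqref{grafokos_eq}) forces $\int_\R\Theta\,\mathrm{d}x=0$, which is precisely what kills the boundary terms $\gamma_\pm\int_\R\Theta$ even when $\gamma_+\neq\gamma_-$. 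This correctly isolates why the failure of $\varphi\in L^1(\R)$ is harmless, which is the whole point of the corollary.

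Two minor points you should make explicit. First, $\eta*\varphi$ need not be absolutely integrable (it tends to $0$ at $\pm\infty$ with no rate), so the stated identity must be read as the limit of $\int_{-R}^{R}$, exactly as your fundamental-theorem-of-calculus step produces it; in the unit-speed case this is unproblematic because the integrand $\psi*\varphi^2$ is non-negative, so monotone convergence upgrades the improper integral to a genuine Lebesgue integral equal to zero. Second, $\psi*\varphi^2\equiv 0$ gives $\varphi=0$ only almost everywhere a priori; since $\psi*\varphi^2$ is continuous and translates of the open set $\{\psi>0\}$ cover $\R$, this is a measure-zero nicety, but ``vanish everywhere'' deserves the one-line remark. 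With these interpretations spelled out, the proof is complete and, in its handling of possibly distinct limits $\gamma_+\neq\gamma_-$, arguably more robust than a splitting of $\varphi$ into constant states plus remainders, which would leave non-integrable half-line pieces to control.
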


\begin{lem}\label{Smoothness}    
Any solution $\varphi$ satisfying $\varphi \leq \frac{\mu}{2}$ to the steady fKdV equation is smooth across every open set where $\varphi$ is less than $\frac{\mu}{2}$.
\end{lem}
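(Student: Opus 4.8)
The plan is to turn the steady equation into a fixed-point identity in which $\varphi$ is expressed as a smooth function of $\Lambda^{-s}\varphi$, and then to run a local bootstrap powered by the smoothing property of $\Lambda^{-s}$.

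First, rewrite \eqref{eq:steady} as the pointwise quadratic relation $\varphi^2 - \mu\varphi + \Lambda^{-s}\varphi = 0$. Writing $w := \Lambda^{-s}\varphi$ and solving for $\varphi$ gives $\varphi = \tfrac12\big(\mu \pm \sqrt{\mu^2 - 4w}\big)$; because the hypothesis $\varphi \le \tfrac{\mu}{2}$ forces $\mu - 2\varphi \ge 0$, and since $\mu^2 - 4w = \mu^2 - 4(\mu-\varphi)\varphi = (\mu - 2\varphi)^2$, we must select the branch
\[
\varphi = g(w) := \tfrac12\Big(\mu - \sqrt{\mu^2 - 4w}\,\Big),
\]
where $g$ is real-analytic on $\{w < \mu^2/4\}$. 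By the particular case of Proposition \ref{Prop_bessel_new}, $w = \Lambda^{-s}\varphi \in C^\alpha(\R) \subset C(\R)$ already from $\varphi \in L^\infty(\R)$, so $\varphi = g(w)$ is continuous on the open set $U := \{\varphi < \mu/2\}$; consequently on any compact $K \Subset U$ one has $\varphi \le \mu/2 - \delta$ for some $\delta>0$, keeping the argument $w$ strictly below the branch point $\mu^2/4$ and $g$ analytic along the range of $w$ over $K$.

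The engine of the argument is the pseudolocality of $\Lambda^{-s}$: if $\varphi \in L^\infty(\R)$ and $\varphi \in C^\beta$ on an open set $V$, then $\Lambda^{-s}\varphi \in C^{\beta+s}$ (in the H\"older--Zygmund scale) on every $V' \Subset V$. To see this I would fix a cut-off $\chi \in C_c^\infty(V)$ with $\chi \equiv 1$ on $V'$ and split $\Lambda^{-s}\varphi = K_s*(\chi\varphi) + K_s*((1-\chi)\varphi)$. The first term has a compactly supported $C^\beta$ density, so Proposition \ref{Prop_bessel_new} (the continuity of $\Lambda^{-s}$ on the Besov/H\"older--Zygmund scale) yields $\Lambda^{-s}(\chi\varphi)\in C^{\beta+s}(\R)$. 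In the second term $(1-\chi)\varphi$ vanishes on $V'$, so for $x$ in a slightly smaller set $V''\Subset V'$ the distance $|x-y|$ stays bounded below on the support of the integrand; since $K_s$ and all its derivatives are smooth away from the origin and decay exponentially (Lemma \ref{lem-K-1}), one may differentiate under the integral arbitrarily often against the bounded density $\varphi$, giving $K_s*((1-\chi)\varphi) \in C^\infty(V'')$. Hence $\Lambda^{-s}\varphi \in C^{\beta+s}(V'')$.

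With these two ingredients the bootstrap is immediate. Set $\beta_0 = 0$ and assume $\varphi \in C^{\beta_n}_{\mathrm{loc}}(U)\cap L^\infty(\R)$ (valid for $n=0$ by the continuity noted above). Pseudolocality gives $\Lambda^{-s}\varphi \in C^{\beta_n + s}_{\mathrm{loc}}(U)$, and composing with the analytic $g$---which preserves local H\"older--Zygmund regularity away from the branch point---yields $\varphi = g(\Lambda^{-s}\varphi) \in C^{\beta_n+s}_{\mathrm{loc}}(U)$. Thus $\beta_{n+1} = \beta_n + s$, and since $s>0$ the exponents increase without bound, so $\varphi \in C^\infty(U)$, as claimed.

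I expect the main obstacle to be the pseudolocality step: one must justify term-by-term differentiation under the nonlocal integral using precisely the smoothness-away-from-zero, the derivative bound and the exponential decay of $K_s$ from Lemma \ref{lem-K-1}, and one must check that composition with $g$ does not degrade regularity---this is where passing to the H\"older--Zygmund scale $\mathcal C^{s}$ matters, since it makes the gain exactly $s$ at each step and sidesteps the integer thresholds of the classical H\"older spaces. The remaining care is purely bookkeeping: ensuring, via continuity and compactness, that $w$ stays uniformly below $\mu^2/4$ on the compact subsets where regularity is being upgraded.
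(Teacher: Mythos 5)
Your proof is correct and takes essentially the same route as the paper's, which simply delegates to \cite[Lemma 3.5]{Orke-arxiv-22} and \cite[Theorem 5.1]{EhrnstromWahlen-AIHAN-19}: solving the quadratic for $\varphi$ and selecting the branch $\varphi = \tfrac12\big(\mu - \sqrt{\mu^2 - 4\Lambda^{-s}\varphi}\big)$ below $\mu/2$, then bootstrapping with the smoothing $\Lambda^{-s}\colon \mathcal{C}^{\beta}(\R) \to \mathcal{C}^{\beta+s}(\R)$ of Proposition \ref{Prop_bessel_new}, localized by a cut-off using the smoothness and exponential decay of $K_s$ away from the origin. The only cosmetic point is that Lemma \ref{lem-K-1} records exponential decay only for $K_s$ and $K_s'$, so in the far-field term you should note (e.g.\ via the subordination integral in Lemma \ref{lem-K-1}) that all higher derivatives of $K_s$ likewise decay exponentially, which justifies your differentiation under the integral.
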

\noindent A very brief proof of the this lemma is given in \cite[Lemma 3.5]{Orke-arxiv-22} which is eventually satisfied for all positive $s$ as we need the map $\Lambda^{-s} : L^\infty(\mathbb R) \to {C}^{s}(\mathbb R)$ to be linear and bounded (Prop. \ref{Prop_bessel_new}). Detailed proof can be found in \cite[Theorem 5.1]{EhrnstromWahlen-AIHAN-19}.

\section{The construction of solitary waves}\label{sec-construction}

In this section, we aim to construct a full family of solitary wave with the decay property. For this, we parameterise our solution $\varphi$ which also depends on $\mu$, and consider $(\varphi,\mu)$ as a solution pair. Let $\lambda$ be the \textit{relative wave height} of a solution given by
\begin{align*}
	\varphi_\lambda(0) = \frac{\lambda \mu}{2}, \quad  0 < \lambda \leq 1.
\end{align*}
Corresponding to $\lambda=1$, the wave attains maximum height. The family of crests is placed continuously between zero solution and the highest wave.
%
The following theorem shows the existence of the highest periodic waves for each period $P$ and relative wave height $\lambda$, and
is the combination of results proved in \cite{Orke-arxiv-22}, \cite{Ehrnstrom_Ola_Kristo}, and \cite{Le-AA-22} for the cases $0<s<1$, $s = 1$, and $s>1$, respectively.
{The case $s = 1$ is similar to the case of bidirectional Whitham equation studied by Ehrnstr\"{o}m, Johnson, and Claassen \cite{ehrnstrom2019existence}  where the kernel $K_P$ has logarithmic blowup at $x = 0,$ and is a particular case of model thoroughly studied by Ehrnstr\"{o}m, M\ae hlen, and Varholm \cite{Ehrnstrom_Ola_Kristo}. Section 3 in \cite{Ehrnstrom_Ola_Kristo} comes with few assumptions on the kernel and solution of the considered problem, which aligns well with our case. Whereas Section 4 in \cite{Ehrnstrom_Ola_Kristo} studies logarithmic kernel, its bound and global regularity of highest waves.


Let $\varphi_{P, \lambda}$ is a periodic solution of \eqref{eq:steady} with the periodised kernel $K_{s,P}$.

\begin{thm}\label{thm-1}
For each finite $P > 0$ and each $\lambda \in (0, 1]$,
\begin{enumerate}[label=\roman*.]

\item the steady  equation \eqref{eq:steady} has an even, $P$-periodic solution $\varphi_{P, \lambda} $ with wave speed $\mu_{P, \lambda}$ and relative wave height $\lambda$ such that $\varphi_{P, \lambda} $ is strictly increasing on $(-\frac P2, 0)$.

\item $\varphi_{P, \lambda}$ is smooth on $\mathbb R \backslash P \mathbb Z$. For $\lambda < 1$, $\varphi_{P, \lambda} \in C^\infty (\mathbb R)$.

\item $\varphi_{P, \lambda} \in C^{s} (\mathbb R)$ for $s \in (0,1)$;  $\varphi_{P, \lambda}$ is log-Lipschitz for $s = 1$; and for $s > 1$, $\varphi_{P, \lambda}$ is Lipschitz.  

\item the solutions have sub-critical wave speed $0 < \mu_{P, \lambda} \leq 1$ and are uniformly bounded by
\begin{align}\label{eq4.1}
	\mu_{P, \lambda} - 1 \leq \varphi_{P, \lambda}
\leq \varphi_{P, \lambda}(0) = \frac{\lambda \mu_{P, \lambda}}{2}. 
\end{align}
\end{enumerate}
\end{thm}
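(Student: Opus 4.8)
The plan is to handle the three regimes $s\in(0,1)$, $s=1$ and $s>1$ within a single bifurcation framework, letting only the near-origin kernel asymptotics distinguish the cases. For existence (part i) I would recast \eqref{eq:steady} with the periodised kernel on the space of even, $P$-periodic functions in a H\"older--Zygmund class, reading $(\mu-\varphi)\varphi=\Lambda^{-s}\varphi$ as a bifurcation problem with spectral parameter $\mu$. Linearising at the trivial solution $\varphi\equiv 0$, the Fourier representation \eqref{Fourier-series} identifies the kernel eigenvalues as $(1+(2\pi k/P)^2)^{-s/2}$, so a simple crossing occurs at $\mu_\ast=(1+(2\pi/P)^2)^{-s/2}<1$ with eigenfunction $\cos(2\pi x/P)$. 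I would then use the Crandall--Rabinowitz theorem for a local analytic curve and the Buffoni--Toland global analytic bifurcation theorem to continue it, the smoothing Proposition \ref{Prop_bessel_new} supplying the compactness needed for the Fredholm and properness hypotheses. Reparameterising the global branch by the relative height $\lambda=2\varphi(0)/\mu$, which runs continuously from $0^+$ near the bifurcation point to $1$ at the extreme wave where $\varphi(0)=\mu/2$, an intermediate-value argument produces, for each $\lambda\in(0,1]$, a pair $(\varphi_{P,\lambda},\mu_{P,\lambda})$.

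Evenness is built into the function space. Strict monotonicity of $\varphi_{P,\lambda}$ on $(-P/2,0)$ I would get from the positivity and monotonicity of $K_{s,P}$ (Lemma \ref{lem-K-2}) and the strict monotonicity of $\Lambda^{-s}$ (Proposition \ref{Prop_bessel_2}): differentiating the equation shows $\varphi_{P,\lambda}'$ solves a linear nonlocal equation to which a touching/maximum-principle argument applies, forcing $\varphi_{P,\lambda}'>0$ there. For the smoothness statement (ii) I would invoke Lemma \ref{Smoothness} directly: on any open set where $\varphi_{P,\lambda}<\mu_{P,\lambda}/2$ the solution is smooth, so for $\lambda<1$ the strict inequality $\varphi_{P,\lambda}(0)=\lambda\mu_{P,\lambda}/2<\mu_{P,\lambda}/2$ yields $\varphi_{P,\lambda}\in C^\infty(\mathbb{R})$, while for $\lambda=1$ only the crest set $P\mathbb{Z}$ can fail, leaving smoothness on $\mathbb{R}\setminus P\mathbb{Z}$.

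The bounds (iv) follow from the equation. The relation $\varphi_{P,\lambda}(0)=\lambda\mu_{P,\lambda}/2$ is the definition of $\lambda$, and the upper bound $\varphi_{P,\lambda}\le\varphi_{P,\lambda}(0)$ is immediate from evenness and monotonicity. For the remaining assertions let $m=\min\varphi_{P,\lambda}=\varphi_{P,\lambda}(P/2)$; evaluating \eqref{eq:steady} at the trough and using that $\Lambda^{-s}$ preserves constants while $\Lambda^{-s}\varphi_{P,\lambda}(P/2)>m$ strictly (positivity of the kernel) gives $(\mu_{P,\lambda}-m)m>m$. Since the branch bifurcates at $\mu_\ast<1$ and Proposition \ref{prop-L2-varphi}, namely $(\mu-1)\int_{-P/2}^{P/2}\varphi=\|\varphi\|_{L^2}^2$, prevents $\mu$ from crossing $1$ without $\varphi$ vanishing, we remain in the regime $\mu_{P,\lambda}\le 1$ with $m<0$; dividing the trough inequality by $m<0$ then yields $\mu_{P,\lambda}-1<m\le\varphi_{P,\lambda}$.

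The crest regularity (iii) is the main obstacle and the only place where the cases genuinely diverge. Subtracting the equation at $x$ from that at the crest and setting $v=\varphi_{P,\lambda}(0)-\varphi_{P,\lambda}\ge 0$, the quadratic nonlinearity collapses, for the highest wave $\lambda=1$, to the nonlocal identity
\begin{align*}
v(x)^2=\int_{-P/2}^{P/2}\big[K_{s,P}(x-y)-K_{s,P}(y)\big]\,v(y)\,\mathrm dy .
\end{align*}
Splitting $K_{s,P}=R_{s,P}+S_{s,P}$ as in Remark \ref{Remark1}, the regular part contributes an even, smooth function of $x$, so by parity its linear-in-$x$ term cancels and it enters only at order $|x|^2$, while the singular part $S_{s,P}(x)\simeq|x|^{s-1}$ (respectively $\log(1/|x|)$ at $s=1$) governs the leading behaviour. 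A scaling ansatz $v(x)\simeq|x|^\alpha$ balances $|x|^{2\alpha}$ against $|x|^{s+\alpha}$, forcing $\alpha=s$ for $s\in(0,1)$, against $|x|^2$ for $s>1$, forcing $\alpha=1$, and yielding the borderline modulus $|x|\log(1/|x|)$ at $s=1$. Converting this heuristic into rigorous two-sided bounds — controlling the error terms uniformly and excluding faster or slower decay of $v$ — is the delicate step; I would follow {\O}rke \cite{Orke-arxiv-22} for $s\in(0,1)$, Ehrnstr\"om--M\ae hlen--Varholm \cite{Ehrnstrom_Ola_Kristo} for the logarithmic case $s=1$, and Le \cite{Le-AA-22} for $s>1$, this last Lipschitz regime, where the analytic part dominates, being the genuinely new contribution.
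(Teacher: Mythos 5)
Your overall architecture coincides with the paper's: existence (i) is delegated to the cited bifurcation constructions (\cite{Orke-arxiv-22} for $s\in(0,1)$, \cite{Le-AA-22} for $s>1$, \cite{ehrnstrom2019existence} for $s=1$), part (ii) is exactly Lemma \ref{Smoothness}, and part (iv) is an elementary manipulation of the steady equation. Your (iv) does take a slightly different route: you evaluate at the trough and invoke Proposition \ref{prop-L2-varphi} together with connectivity of the branch to keep $\mu_{P,\lambda}\le 1$, whereas the paper runs a super-solution argument leading to $\inf\varphi\,(\inf\varphi-(\mu-1))\le 0$, and -- importantly -- proves the upper bound $\varphi_{P,\lambda}<\mu_{P,\lambda}/2$ directly by applying Proposition \ref{Prop_bessel_2}(ii) to the differentiated equation $\Lambda^{-s}\varphi_{P,\lambda}'=(\mu_{P,\lambda}-2\varphi_{P,\lambda})\varphi_{P,\lambda}'$. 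That last step is not cosmetic: your parameterisation $\lambda=2\varphi_{P,\lambda}(0)/\mu_{P,\lambda}\in(0,1]$ presupposes $\varphi_{P,\lambda}(0)\le\mu_{P,\lambda}/2$, so deducing the upper bound ``from evenness and monotonicity'' is mildly circular unless you import that bound wholesale from the cited branch constructions; the paper's argument makes it self-contained.

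The substantive shortfall is in (iii) at $s=1$. For $s\in(0,1)$ and $s>1$, citing \cite{Orke-arxiv-22} and \cite{Le-AA-22} is precisely what the paper does (so the Lipschitz regime is not the ``genuinely new contribution'' -- Le's Theorem 4.6 already covers it; the case requiring new work here is $s=1$). For $s=1$, ``I would follow Ehrnstr\"om--M\ae hlen--Varholm'' is not yet a proof: \cite{Ehrnstrom_Ola_Kristo} is a framework with hypotheses on the kernel, and the actual content of the paper's proof is the verification of those hypotheses for the \emph{periodised} Bessel kernel. Concretely, one splits $K_{1,P}=R_{1,P}+S_{1,P}$ with $S_{1,P}$ the logarithmic singularity and must show $R_{1,P}''\in L^1(-\tfrac P2,\tfrac P2)$; the paper does this by the explicit Bessel computation $\int_{-P/2}^{P/2}|R_{1,P}''|\le\frac2\pi\int_0^\infty\bigl|G_0''(x)-x^{-2}\bigr|\,\mathrm dx=\frac4\pi$. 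Only then do the asymptotics $\varphi_P=\frac\mu2-(\frac12+o(1))\,x\log(1/x)$ and the modulus-of-continuity lemma yield the log-Lipschitz bound \eqref{log-Lips} \emph{uniformly in $P$} -- uniformity that Theorem \ref{main-thm} later needs in order to pass the modulus to the solitary-wave limit. Note also that your heuristic step asserting the regular part ``enters only at order $|x|^2$'' fails exactly at $s=1$: there $R_{1,P}''$ has a logarithmic singularity and is merely integrable, not bounded, which is why the $L^1$ verification -- the one piece of genuinely new analysis in this theorem -- cannot be bypassed by a parity argument.
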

The existence of such solutions $\varphi_{P, \lambda}$ when $s \in (0,1)$ is given in Section 3.3 of \cite{Orke-arxiv-22} and is proved by using bifurcation theory. Section 5 of \cite{Le-AA-22} follows the same approach for $s > 1$. The case $s = 1$ for existence of the desired solutions is addressed in \cite[Section 5]{ehrnstrom2019existence}. 
The smoothness of $\varphi_{P, \lambda}$ away from the origin is ensured by Lemma \ref{Smoothness}.
Third part (\textit{iii.}) demonstrates the behavior of solutions near origin. The highest travelling wave has $s-$H\"{o}lder regularity near origin when $s \in (0,1)$ and nicely demonstrated in Theorem 3.7 in \cite{Orke-arxiv-22}. For $s > 1$, Theorem 4.6 in \cite{Le-AA-22} proves Lipschitz continuity of the solution near origin for the equation \eqref{eq:steady}. 
{\cite{Ehrnstrom_Ola_Kristo} deals with the kernel having logarithmic singularity around zero.
We discuss this case here to show that the our case with $s = 1$ falls in the same  settings, however, here we restrict ourselves to the periodic solutions. }  
\begin{proof}
\textit{iii.} (for the case $s=1$)
For the steady periodic solution $\varphi_P$ of \eqref{eq-fKdV}, we have
\begin{align}\label{steady.periodic.eq}
	K_{s}*\varphi_P = \mu \varphi_P - \varphi_P^2.
\end{align}
Let $f(t) = \mu t - t^2$ such that $-\frac 12 f''(t) = 1$. The function $f$ attains a maximum at $\frac \mu 2$, and 
\begin{align*}
    f\big(\frac \mu 2 \big) - f(t) = \big( -\frac 12 f''(t) \big) \big( \frac \mu 2 - t \big)^2 = \big( \frac \mu 2 - t \big)^2.
\end{align*}
Let $\om = \frac \mu 2 - \varphi_P$, then $\om>0$ satisfies the above equation in the following sense
\begin{align}\label{omega}
\om^2(x) & = K_{s}*\om(x) - K_{s}*\om(0) \nonumber\\
         & = \int_{-P/2}^{P/2} K_{s,P}(x-y) \om(y) \,\mathrm dy - \int_{-P/2}^{P/2} K_{s,P}(y) \om(y) \,\mathrm dy \nonumber\\
         & = \int_{0}^{P/2} [ K_{s,P}(x+y) + K_{s,P}(x-y) - 2K_{s,P}(y)] \om(y) \,\mathrm dy  \nonumber\\        
         & = \int_{0}^{P/2} \Delta_x^2 K_{s,P}(y) \om(y) \,\mathrm dy.
\end{align} 
where $\Delta_x^2f(t)$
is the usual second-order difference. 
Since, $K_{s,P}(x)$ comes with logarithmic singularity $\log(1/|x|)$ for $s=1$, we divide \eqref{omega} by $\frac{1}{x^2 \log (1/x)^2}$ and split the integral for $0<x<\frac{P}{2}< \infty$, we have
\begin{align}\label{Big_Intgral}
	\big(g(x)\big)^2 := \left( \frac{\om(x)}{x \log (1/x)} \right)^2 & = \frac{1}{(x \log (1/x))^2}
    \left( \int_{0}^{x} + \int_{x}^{P/2} \right) 
    \Delta_x^2 K_{s,P}(y) \om(y) \,\mathrm dy.
\end{align}
Following the same steps as in \cite[Lemma 4.1]{Ehrnstrom_Ola_Kristo}, we get the bounds for $g(x)$ as follows
\begin{align}\label{Gt_sq}
 o(1)g(x)+ \frac 12 \max_{t\in (0,r] }g(t) \geq g(x)^2 \geq o(1)g(x)+ \frac 12 \min_{t\in (0,r] }g(t),
\end{align}
provided $R_{s,P}'' \in L^1(\frac{-P}{2},\frac{P}{2})$, where $R_{s,P}$ is the regular part of the kernel $K_{s,P}$ given by Remark \eqref{Remark1}. For $x>0$, $ R_{s,P}''(x) = K_{s,P}''(x) - S_{s,P}''(x)$, and
\begin{align*}
     \int_{-P/2}^{P/2} |R_{s,P}''(x)| \,\mathrm dx 
    &= \int_{-P/2}^{P/2} \left| K_{s,P}''(x) - S_{s,P}''(x) \right| \,\mathrm dx\\
    & \leq  \sum_{n\in \mathbb{Z}} \int_{-P/2+nP}^{P/2+nP} \left| K_{s}''(x) - S_{s}''(x) \right| \,\mathrm dx
     = 2\int_{0}^{\infty}  \left| K_{s}''(x) - S_{s}''(x) \right|  \,\mathrm dx, 
\end{align*}
where $S_s(x)$ is the singular part in $K_s$ and is {$\frac{1}{\pi} \log (\frac{1}{|x|})$} near the origin when $s=1$. $K_{s}(x) = \frac{1}{\pi} G_0(x)$, where $G_0(x) = \int_1^{\infty} \frac{e^{-xt}}{\sqrt{t^2-1}}\,\mathrm dt$ is the modified Bessel function of second kind for $s=1$, and $G_0''(x) = \int_1^{\infty} e^{-xt} \frac{t^2}{\sqrt{t^2-1}}\,\mathrm dt,$ this gives
\begin{align*}
     \int_{-P/2}^{P/2} | R_{s,P}''(x) | \,\mathrm dx 
   & \leq \frac{2}{\pi} \int_{0}^{\infty} \big| G_{0}''(x) - \frac{1}{x^2} \big| \,\mathrm dx\\
    & = \frac{2}{\pi} \int_{0}^{\infty} \left| \int_1^{\infty} e^{-xt} \frac{t^2}{\sqrt{t^2-1}}\,\mathrm dt - \int_0^{\infty} te^{-xt} \,\mathrm dt \right| \,\mathrm dx\\
    & \leq \frac{2}{\pi} \int_{0}^{\infty} \left(  \int_1^{\infty} e^{-xt} \left( \frac{t^2}{\sqrt{t^2-1}}-t\right)\,\mathrm dt + \int_0^{1} te^{-xt} \,\mathrm dt \right) \,\mathrm dx\\
    & = \frac{2}{\pi} \left( \int_1^{\infty} \left( \frac{t}{\sqrt{t^2-1}}-1  \right)\,\mathrm dt + \int_0^{1} 1 \,\mathrm dt \right) = \frac{4}{\pi}.
\end{align*}

Since $\varphi_P$ is a periodic, even solution of the steady equation \eqref{steady.periodic.eq} such that it is  decreasing and smooth on an interval $(0,r), \ r>0$, and $\varphi_P(0) = \frac \mu 2$, then from \eqref{Gt_sq}
\begin{align}\label{AsymBehavior1}
	\varphi_P = \frac{\mu}{2} - \left(  \frac{1}{2} + o(1) \right) x \log(1/x) \quad \text{as} \quad x \searrow 0.
\end{align}
As $\varphi_P$ is continuously differentiable and decreasing in $(0,P/2)$, following the same idea as above, we have
\begin{align}\label{AsymBehavior2}
	\varphi_P' =  - \left(  \frac{1}{2} + o(1) \right) \log(1/x) \quad \text{as} \quad x \searrow 0.
\end{align}
A detailed proof of \eqref{AsymBehavior2} can be found in \cite[Prop. 4.4]{Ehrnstrom_Ola_Kristo}.

\noindent Lemma 4.6 of \cite{Ehrnstrom_Ola_Kristo} proves that for an absolutely continuous function $\varphi$ on an open interval $I$ containing zero, and for a modulus of continuity $w$ such that 
\begin{align}\label{eq4.10}
    \text{ess} \lim_{x\to0} \text{sup}  \frac{|\varphi'(x)|}{w'(|x|)} < \infty,
\end{align}
 $\varphi$ admits $Mw$ as a modulus of continuity on the same interval $I$, i.e.,
 \begin{align*}
    |\varphi(x)-\varphi(y)| \leq M w(|x-y|), \quad \text{for all} \quad x,y \in I,
\end{align*}
for a positive constant $M$, depending only on \eqref{eq4.10} and {properties of $w$}.
Since $\varphi_P$ is {an absolutely continuous, even, and $P$-periodic solution of \eqref{steady.periodic.eq} given by \eqref{AsymBehavior1} and \eqref{AsymBehavior2} which is smooth and decreasing on a small interval $(0,r)$ then by choosing $w(x) = x\log(1+1/x)$, we have
\begin{align}\label{log-Lips}
    |\varphi_P(x)-\varphi_P(y)| \leq M |x-y| \log \left( 1 + \frac{1}{|x-y|} \right),
\end{align}
on an open interval $I$ containing zero.}
Since the solution is uniformly bounded and smooth away from the origin, with estimates only depending on the \(L_\infty\)-norm of the solution (Lemma \eqref{Smoothness}), we get \eqref{log-Lips} as a bound, uniformly in $P$.

\textit{iv.} For the inequality \eqref{thm-1}, we assume that there exists such solutions described by (\textit{i.}), let $\varphi_1$ be the super-solution
of \eqref{eq:steady}, i.e.  $-\mu \varphi_1  + \Lambda^{-s}\varphi_1 + \varphi_1^2 \leq 0 $. We have,
\begin{align*}
-\mu \varphi_1  + \varphi_1^2 & \leq - \Lambda^{-s}\varphi_1  \\
\implies \big(- \frac{\mu}{2} + \varphi_1 \big)^2  \leq  \frac{\mu^2}{4} & - \Lambda^{-s}\varphi_1   \leq  \frac{\mu^2}{4} - \inf\varphi_1,
\end{align*}	
as $\Lambda^{-s}$ is a monotone operator (Prop. \ref{Prop_bessel_2}) and $\Lambda^{-s}c=c$ for positive constant $c$. In particular, 
\begin{align*}
\big(- \frac{\mu}{2} + \inf \varphi_1 \big)^2  \leq  \frac{\mu^2}{4} - \inf\varphi_1 \implies \inf\varphi_1 \big(\inf \varphi_1 - (\mu -1) \big) \leq 0.
\end{align*}
If $\inf \varphi_1 \ge 0$, this gives $\inf \varphi_1 \leq \mu -1$, and this implies, $\mu \ge 1.$ Hence, for the case $\mu \leq 1$, $\mu -1 \leq \inf \varphi_1$. Any P-periodic solution $\varphi_P$ of equation \eqref{eq:steady} must satisfy the inequality obtained for the super-solution i.e. $\mu -1 \leq \inf \varphi_P$.

\noindent From equation \eqref{eq:steady}, $ \Lambda^{-s}\varphi_P' = (\mu - 2 \varphi_P) \varphi_P' $, and since, $ \varphi_P' $ satisfies all properties of function described by Prop. \ref{Prop_bessel_2} (\textit{ii}), hence, $ \Lambda^{-s}\varphi_P' > 0$ implies that $ \mu - 2 \varphi_P > 0 $ and we have, $  \varphi_P <  \mu/2 $.
Let us parameterize the family by assuming  $\lambda$ as a relative wave height of solution such that  $\varphi_{P, \lambda}(0) = \frac{\lambda \mu_{P, \lambda}}{2}, \lambda \in (0,1]$, and combining the results, we have \eqref{eq4.1}.
\end{proof}

In the following theorem, we shall establish limit for the solution as the period $P$ approaches infinity. 
%
\begin{thm}\label{prop-delta}
	Let $(\varphi_{P, \lambda}, \mu_{P,\lambda})$
be as in the previous theorem then for all $P \ge 1$, $\lambda \in (0, 1]$, there exists a positive constant $\delta$ such that for $|x| \leq \delta$, $\varphi_{P, \lambda}$ will satisfy the following inequalities depending upon the value of $s$,
\begin{align}
	\frac{\mu_{P, \lambda}}{2} - \varphi_{P, \lambda}(x) \ge \delta |x|^{\sigma}, 
\end{align} 
where $\sigma = s$ when $s \in (0,1)$, and $\sigma = 1 $ for $  s > 1$.
For $s = 1$, we have
\begin{align}\label{logbound}
	\frac{\mu_{P, \lambda}}{2} - \varphi_{P, \lambda}(x) \ge \delta |x \log |x||.
\end{align} 
%
%
\end{thm}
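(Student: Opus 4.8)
The plan is to work with the defect $\omega_{P,\lambda} := \frac{\mu_{P,\lambda}}{2} - \varphi_{P,\lambda}$. By Theorem \ref{thm-1} this is even, non-negative, strictly increasing on $(0,P/2)$, with $\omega_{P,\lambda}(0) = \frac{\mu_{P,\lambda}}{2}(1-\lambda) \ge 0$ and $\|\omega_{P,\lambda}\|_{L^\infty} \le \frac{\mu_{P,\lambda}}{2} \le \tfrac12$, the last bounds being uniform in $P$ and $\lambda$ by \eqref{eq4.1}. Since $\omega_{P,\lambda}(x) > 0$ for $x \ne 0$, proving the three claimed inequalities amounts to a uniform lower bound on $\omega_{P,\lambda}$ near the origin; the genuinely critical situation is $\lambda = 1$, where the a priori gap $\omega_{P,\lambda}(0)$ vanishes, and I would treat all $\lambda \in (0,1]$ through a single estimate so that uniformity in $\lambda$ comes for free. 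Write $\omega = \omega_{P,\lambda}$ and $\mu=\mu_{P,\lambda}$ for brevity.

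First I would record the quadratic identity underlying the whole argument. Inserting $\varphi = \frac{\mu}{2}-\omega$ into \eqref{eq:steady}, using $K_{s,P}* 1 = 1$ and $f(t)=\mu t - t^2$ with $f(\mu/2)-f(t) = (\mu/2 - t)^2$, one finds $\omega^2 = K_{s,P}*\omega - c$ with $c = \frac{\mu}{2}(1-\frac{\mu}{2})$; subtracting the value at $0$ and folding by evenness gives, exactly as in the derivation of \eqref{omega},
\[
\omega(x)^2 - \omega(0)^2 = \int_{0}^{P/2} \Delta_x^2 K_{s,P}(y)\,\omega(y)\,\mathrm dy, \qquad \Delta_x^2 K_{s,P}(y) = K_{s,P}(x+y)+K_{s,P}(x-y)-2K_{s,P}(y).
\]
The lower bound must then be extracted from the right-hand side. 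I would split the integral at $y = x$. On $(x, P/2)$ the convexity of $K_{s,P}$ on $(0,P)$ (Lemma \ref{lem-K-2}; for $s>2$ one argues similarly from the smoothness of the kernel) gives $\Delta_x^2 K_{s,P} \ge 0$, and monotonicity $\omega(y)\ge\omega(x)$ turns this piece into a non-negative contribution bounded below by $\omega(x)\int_x^{P/2}\Delta_x^2 K_{s,P}$. On $(0,x)$ the integrand is sign-indefinite: it is negative near $y=0$, where $-2K_{s,P}(y)$ dominates, and large (singular for $s\le 1$) as $y\to x^-$, coming from $K_{s,P}(x-y)$.

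The heart of the proof is the balance on $(0,x)$, and here I would substitute the $P$-independent near-origin asymptotics of Remark \ref{Remark1}, namely $S_{s,P}(y)\simeq_s |y|^{s-1}$ for $s\ne 1$ and $S_{s,P}(y)\simeq \log\frac1{|y|}$ for $s=1$, treating the regular part $R_{s,P}$ as a lower-order perturbation through the uniform bound $\int_{-P/2}^{P/2}|R_{s,P}''| \lesssim 1$ (the $4/\pi$ computation for $s=1$ and its analogue $K_s''-S_s''\in L^1$ for the other ranges). By the scaling $y = x u$ the singular contribution of the second difference integrates to order $|x|^{\sigma}$ (respectively $|x\log|x||$ for $s=1$), but with a sign that is not manifest; the decisive point is that the positive singularity near $y=x$ must dominate the negative one near $y=0$. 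To see this I would use, in addition to monotonicity, the continuity modulus of $\omega$ established in Theorem \ref{thm-1}(iii), so that $\omega(y)$ stays comparable to $\omega(x)$ on the part of $(0,x)$ carrying the positive singularity — this is exactly the mechanism realised for $s=1$ by the normalised quantity $g(x)=\omega(x)/(x\log(1/x))$ and the two-sided bounds \eqref{Gt_sq}. The outcome is an inequality of the form $\omega(x)^2 \gtrsim |x|^{\sigma}\,\omega(x)$ (with $|x\log|x||$ in place of $|x|$ when $s=1$), and dividing by $\omega(x)>0$ yields the stated bound with an explicit $\delta$.

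Finally I would check that $\delta$ and the radius can be chosen uniformly in $P\ge 1$ and $\lambda\in(0,1]$. This is where the structure pays off: the constants entering the estimate depend only on $\|\omega\|_{L^\infty}\le\frac12$ and $0<\mu\le1$, on the $P$-independent form of $S_{s,P}$ near the origin, and on the $P$-uniform $L^1$ bound on $R_{s,P}''$, none of which degenerate as $P\to\infty$ or $\lambda\to1$. The main obstacle, and the place where the three ranges of $s$ genuinely differ, is precisely the positivity and uniformity of the leading constant produced by the competition between the negative contribution near $y=0$ and the large/divergent one near $y=x$; establishing this sign uniformly in $P$ is what forces the use of the matching upper-regularity information on $\omega$ rather than monotonicity alone, since the naive estimate that simply discards the positive part of $(0,x)$ produces a leading constant of the wrong sign.
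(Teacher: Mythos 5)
Your plan is genuinely different from the paper's proof: you propose to run the quadratic second-difference identity $\omega(x)^2-\omega(0)^2=\int_0^{P/2}\Delta_x^2K_{s,P}(y)\,\omega(y)\,\mathrm dy$ uniformly over all three regimes of $s$, whereas the paper uses that identity (through the normalised function $g$ and the two-sided bounds \eqref{Gt_sq}, following \cite{Ehrnstrom_Ola_Kristo}) only in the case $s=1$. For $s\in(0,1)$ the paper instead starts from the first-derivative inequality \eqref{temp-1}, localises the integral to $(p_2,p_1)$, uses the mean value theorem together with the convexity of $K_{s,P}$ to extract the factor $-K'_{s,P}(2p_2)$, and then integrates in $x$; the solution-dependent factor $\varphi_{P,\lambda}(p_1)-\varphi_{P,\lambda}(p_2)$ cancels, leaving the purely kernel-dependent bound \eqref{eq4.6} — which is precisely why uniformity in $P$ and $\lambda$ is automatic there. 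For $s>1$ the paper divides \eqref{temp-1} by the Lipschitz upper bound and applies Fatou together with $K'_{s,P}\in L^1$ (Lemma \ref{lem-K-2}). Unfortunately, your route as sketched contains a genuine gap at exactly the step you flag as decisive.

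The problem is that the claimed domination of the positive singularity near $y=x$ over the negative one near $y=0$ cannot be obtained from monotonicity plus the upper modulus of continuity of $\omega$, because
\[
\int_0^{P/2}\Delta_x^2K_{s,P}(y)\,\mathrm dy
=\int_{x-P/2}^{x+P/2}K_{s,P}(u)\,\mathrm du-\int_{-P/2}^{P/2}K_{s,P}(u)\,\mathrm du=0
\]
exactly, by periodicity and evenness. Hence any estimate that replaces $\omega(y)$ by $\omega(x)$ up to modulus-of-continuity errors — which is all your listed tools permit — produces $0\cdot\omega(x)+O(|x|^{2s})$, not $\gtrsim|x|^{s}\omega(x)$; worse, on $(0,x)$ alone the weight-$\omega(x)$ computation with $K_{s,P}(u)\approx cu^{s-1}$ gives the net constant $\tfrac{c}{s}(2^{s}-2)<0$ for $s<1$, so you confirm your own worry about the sign. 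The genuinely positive contribution is $\int_x^{P/2}\Delta_x^2K_{s,P}(y)\,(\omega(y)-\omega(x))\,\mathrm dy$, i.e.\ it comes from a \emph{lower} bound on the growth of $\omega$ beyond $x$ — which is the very bound being proved, so the argument is circular unless one carries out the full bootstrap of \cite{Ehrnstrom_Ola_Kristo} (two-sided bounds on $g$ and the accompanying min/max argument), which you invoke only for $s=1$ and do not adapt to $s\neq1$. Two further obstructions: for $s>1$ the kernel is bounded at the origin, so there is no positive singularity at $y=x$ for your mechanism to exploit at all (the paper's Case 2 uses a different idea, namely the Lipschitz upper bound of Theorem \ref{thm-1}(\textit{iii.}) fed back into \eqref{temp-1}); and for $s>2$ your parenthetical that convexity follows ``similarly from the smoothness of the kernel'' is false — Lemma \ref{lem-K-2} gives convexity only for $0<s\le2$, and for $s>2$ one has $K_s(x)\approx K_s(0)-c\,|x|^{\min(s-1,2)}$ near the origin, so $K_{s,P}$ is concave there and $\Delta_x^2K_{s,P}(y)\ge0$ fails for $y$ slightly larger than $x$. (A minor slip besides: \eqref{eq4.1} gives $\|\omega_{P,\lambda}\|_{L^\infty}\le 1-\mu_{P,\lambda}/2$, not $\le\mu_{P,\lambda}/2\le\tfrac12$.)
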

\begin{proof}

We shall only consider the case $x < 0$ as $\varphi_{P, \lambda}$ is even. Now, the periodicity and evenness of $K_{s, P}$ and $\varphi_{P, \lambda}$ give
\begin{align}
	&(\Lambda^{-s}\varphi_{P, \lambda})(x + h)-
(\Lambda^{-s}\varphi_{P, \lambda})(x - h)\nonumber\\
=&	\int_{-\frac{P}{2}}^{0}
\Big(
\big(K_{s, P} (y - x) - 
K_{s, P} (y + x)\big)
\big(
\varphi_{P, \lambda}(y + h)
-
\varphi_{P, \lambda}(y - h)
\big)
\Big)
\,
\mathrm dy. \label{temp-integral}
\end{align}
By Lemma \ref{lem-K-1} and Lemma \ref{lem-K-2}, we have
\begin{align*}
    K_{s, P} (y - x) - K_{s, P} (y + x) > 0\quad \text{for}\quad -\frac{P}{2} < x, y < 0.
\end{align*}
By Theorem \ref{thm-1} (\textit{i.}), 
\begin{align*} 
\varphi_{P, \lambda}(y + h)
-
\varphi_{P, \lambda}(y - h) \ge 0 \quad \text{for} \quad -\frac{P}{2} < y < 0,\, 0 < h < \frac{P}{2}.
\end{align*}
For $-\frac{P}{2} < x < 0$, by \eqref{temp-integral} and Fatou's lemma, we get
\begin{align}
	\Big(
\frac{\mu_{P, \lambda}}{2} - \varphi_{P, \lambda}(x)\Big)   \varphi'_{P, \lambda}(x)
&=
\lim_{h \to 0}
\frac
{(\Lambda^{-s} \varphi_{P, \lambda})(x + h) 
- 
(\Lambda^{-s} \varphi_{P, \lambda})(x - h)}
{4h} \nonumber\\
&\ge 
\frac{1}{2}
\int_{-\frac{P}{2}}^{0}
\big(K_{s, P} (y - x) - 
K_{s, P} (y + x)\big)
\varphi_{P, \lambda}'(y)
\,
\mathrm dy.\label{temp-1}
\end{align}

\textbf{Case 1} when $s \in (0,1)$: 
Let's fix two points say $p_1$, $p_2$ from the interval $(-\frac{P}{2}, 0)$ such that $ p_2 < p_1 $. Choose $x \in (p_2, p_1)$ and $\varsigma \in (-\frac{P}{2}, p_2]$. Then from  \eqref{temp-1}, and 
the monotonicity of $\varphi_{P, \lambda}$ on $(-\frac{P}{2}, 0)$,
we have	
\begin{align*}
\Big(
\frac{\mu_{P, \lambda}}{2}
-
\varphi_{P, \lambda}(\varsigma)
\Big)
\varphi'_{P, \lambda} (x)	
&\ge 
\Big(
\frac{\mu_{P, \lambda}}{2}
-
\varphi_{P, \lambda}(x)
\Big)
\varphi'_{P, \lambda} (x)\\
&\ge
\frac 12 \int_{-\frac{P}{2}}^{0}
\Big(
K_{s, P}(y - x) - K_{s, P}(y + x)
\Big)\varphi'_{P, \lambda}(y)\,
\mathrm dy\\
&\ge 
\frac 12 \int_{p_2}^{p_1}
\Big(
K_{s, P}(y - x) - K_{s, P}(y + x)
\Big)\varphi'_{P, \lambda}(y)\,
\mathrm dy\\
&=
\frac 12 \int_{p_2}^{p_1}
(-2x)
K_{s, P}'(y + \tau)
\varphi'_{P, \lambda}(y)
\,\mathrm dy\\
&\ge -p_1 K_{s, P}'(2p_2)
\Big(
\varphi_{P, \lambda}(p_1) - \varphi_{P, \lambda}(p_2)
\Big),
\end{align*}
where $|\tau| < |x|$ follows from 
the convexity of $K_{s, P}$ on $(-\frac{P}{2}, 0)$
and the mean value theorem. Integrating from $p_2$ to $p_1$ on $x$ and dividing by $\big(\varphi_{P, \lambda}(p_1) - \varphi_{P, \lambda}(p_2)\big)$ gives
\begin{align}\label{eq4.6}
	\frac{\mu_{P, \lambda}}{2} - \varphi_{P, \lambda}(\varsigma)
	\ge -p_1 K_{s, P}'(2p_2)(p_1 - p_2).
\end{align}
Now, 
from  \eqref{eq:periodisedKernel} and Lemma \ref{lem-K-1}, we have
 \begin{align*}
     K'_{s, P}(x) & = K'_s(x) + \sum_{n \in \mathbb Z \backslash \{ 0 \}} K'_s(x + nP)\\
& \leq c_1 (s-1) |x|^{s-2} + O(|x|^s) + c_2 \sum_{n \in \mathbb Z \backslash \{ 0 \}} |x+nP|  e^{-|x+nP|},
 \end{align*}
here
$c_1$ and $c_2$ are positive constants depending on $s$ only, and $x$ is certainly negative. 
By rewriting the above relation as
  \begin{align*}
    - K'_{s, P}(x) & \geq - c_1 (s-1) |x|^{s-2} - O(|x|^s) - c_2 \sum_{n \in \mathbb Z \backslash \{ 0 \}} |x+nP|e^{-|x+nP|}.
 \end{align*}
 For 
 $-\frac{P}{4}<p_2<0, $
   \begin{align*}
 - K'_{s, P}(2p_2) & \geq - c_3 |p_2|^{s-2} - O(|2p_2|^s) - c_2 \sum_{n \in \mathbb Z \backslash \{ 0 \}} |2p_2+nP|e^{-|2p_2+nP|} \\
   & \geq - c_3|p_2|^{s-2} - O(|2p_2|^s) - c_2 \sum_{n \in \mathbb Z \backslash \{ 0 \}} \left(\frac{2|n|+1}{2}\right)Pe^{-\left(\frac{2|n|+1}{2}\right)P} \\
   & \geq - c_3|p_2|^{s-2},
 \end{align*}
 here $c_3$ is another positive constant depending on $s$. 
 %
By using the above inequality in (\ref{eq4.6}), 
 \begin{align*}
	\frac{\mu_{P, \lambda}}{2} - \varphi_{P, \lambda}(\varsigma)
	\ge -p_1 c_3 |p_2|^{s-2} (p_1 - p_2).
\end{align*}
and taking 
$\varsigma = p_2 = x$ and $p_1 = \frac{x}{2}$ with $-\frac{P}{4} < x < 0$, we have
\begin{align}
\frac{\mu_{P, \lambda}}{2} - \varphi_{P, \lambda}(x)
\ge c'|x|^{s}. \label{mu-minus-phi}	
\end{align}	
 Now, by picking $0 < \delta < c'$ small enough, we get
\begin{align*}
 \frac{\mu_{P, \lambda}}{2} - \varphi_{P, \lambda}(x)
\ge
\delta |x|^{s},    
\end{align*}
uniformly for $P \ge 1$, $0 < \lambda \leq 1$ and 
$|x| < \delta$ by continuity.

 \textbf{Case 2} when $s >1$: The idea of proof given here is taken from \cite{Bruell-Dhara-2018}. 
 From Theorem \ref{thm-1}, we know that the $ \varphi_{P, \lambda} $ is Lipschitz at crest, that is, 
 \begin{align}\label{upper_bound}
\mu - \varphi_{P, \lambda} \lesssim |x| \quad \text{for} \quad |x| \ll 1,  
\end{align}
which sets the upper bound for  $(\mu - \varphi_{P, \lambda} )$.
To establish the lower bound for $|x| \ll 1$,  we divide \eqref{temp-1} by \eqref{upper_bound}, 
\begin{align}\label{case2.2}
  \varphi'_{P, \lambda}(x)
\gtrsim 
\frac{1}{2}
\int_{-\frac{P}{2}}^{0}  \frac{K_{s, P} (y - x) -  K_{s, P} (y + x)}{|x|} \varphi_{P, \lambda}'(y)
\,
\mathrm dy.
\end{align}
Since, for some $y \in (-\frac{p}{2},0)$, we can write
\begin{align*}
\lim_{x \to 0^-} \frac{K_{s, P} (y - x) -  K_{s, P} (y + x)}{|x|} = 2 K'_{s, P} (y).
\end{align*}
By substituting this value in \eqref{case2.2} and applying Fatou's lemma, we get  
\begin{align}\label{lower_bound_const}
\liminf_{x \to 0}  \varphi'_{P, \lambda}(x)
\gtrsim 
\int_{-\frac{P}{2}}^{0}  K'_{s, P} (y) \varphi_{P, \lambda}'(y)
\,
\mathrm dy,
\end{align}
{ As both $ K'_{s, P} $ and $ \varphi_{P, \lambda}'$ are continuous and bounded in $(-\frac{p}{2},0)$, the integral in the left hand side is a constant.}
%
Now, by applying the Mean Value Theorem on  $\varphi_{P, \lambda} $ for interval $(x,0)$ for $x<0$,
\begin{align*}
\frac{\varphi_{P, \lambda}(0)-\varphi_{P, \lambda}(x)}{|x|} = \varphi'_{P, \lambda}(\xi) \quad \text{for some} \quad |\xi| \ll 1.
\end{align*}
Since, $\varphi_{P, \lambda}(0) = \frac{\lambda \mu_{P, \lambda}}{2}$, or $\varphi_{P, \lambda}(0) \leq \frac{ \mu_{P, \lambda}}{2}$, by placing the lower bound from \eqref{lower_bound_const}, we have the desired result.

 \textbf{Case 3} when $s = 1$: We discussed this case in the previous theorem, which provides the stronger version, however, to get the uniform upper bound let's restrict ourselves to the one inequality from \eqref{Gt_sq} only, 
 \begin{align}
g(x)^2 \geq o(1)g(x)+ \frac 12 \min_{t\in (0,r] }g(t),
\end{align}
here $g(x)=\frac{ \frac{\mu_{P,\lambda}}{2} - \varphi_{P, \lambda}(x)}{x\log(1/x)}$ is positive and monotonic in $(0,r]$, for any small $\delta > 0,$ 
 \begin{align*}
     \frac{ \frac{\mu_{P,\lambda}}{2} - \varphi_{P, \lambda}(x)}{|x\log(1/x)|} \geq \delta
 \end{align*}
$\frac{\mu_{P,\lambda}}{2}-\varphi_{P, \lambda}(x) \in C([0,P])$ is bounded and non-negative, providing $\varphi_{P, \lambda}$ is periodic solution of \eqref{eq:steady}, and $\varphi_{P, \lambda}(0)= \frac{\lambda \mu_{P, \lambda}}{2}$. Also, $|x\log(1/x)|$ in increasing function in $(0,r]$, \eqref{logbound} holds true for $s=1$.
\end{proof}

We now construct a sequence $\{ (\varphi_{P_n, \lambda}, \mu_{P_n, \lambda})  \}$ of periodic solution pairs obtained from the previous two theorems. We shall show that there exists a sub-sequence converging to a non-periodic solution which might not decay to $0$ as $P_n \to \infty$. The limiting solution will eventually be the solution we desire and after Galilean transform, it gives the solitary wave solution.

\begin{thm}\label{main-thm}
The sequence  $\{ (\varphi_{P_n, \lambda}, \mu_{P_n, \lambda})  \}$ of solution pairs from Theorem \ref{thm-1} has a subsequence converging locally uniformly to a solution pair 
$(\phi_\lambda, \vartheta_\lambda) \in C(\mathbb R) \times (0,1]$ as $P_n \to \infty$
with relative height $\lambda \in(0, 1] $ such that 
\begin{align*}
	\vartheta_\lambda - 1 \leq \phi_\lambda \leq \frac{\lambda \vartheta_\lambda}{2}.
\end{align*}
This non-periodic limiting solution $\phi_\lambda$ is bounded, non-constant, even, and strictly increasing on $(-\infty, 0)$. Moreover, $\phi_\lambda$ is  smooth
on $\mathbb R$ for $\lambda \in (0, 1)$ and $\vartheta_\lambda > 0$.

Further, the Galilean transformation 
 \begin{align}\label{Gal_Trans}
     \Phi_\lambda = \phi_\lambda + 1 - \vartheta_\lambda,~~~~~~~ \mu_\lambda = 2 - \vartheta_\lambda,
\end{align}
 provide a solution $  \Phi_\lambda $ of \eqref{eq:steady} which satisfies all the properties of $\phi_\lambda$ and gives the desired solitary wave solution providing $\lim_{x \to \pm \infty} \Phi_\lambda(x) = 0$ with supercritical wave speed $ \mu_\lambda \in (1,2) $ such that
\begin{align*}
	0 < \Phi_\lambda \leq \mu_\lambda - 1 + \lambda \big( 1 - \frac{\mu_\lambda}{2} \big). 
\end{align*}
$\Phi_\lambda$ is smooth everywhere for $\lambda \in (0,1)$ and for $\lambda = 1$, {{$\Phi_\lambda$}} is smooth on $\mathbb R \backslash \{0\} $ such that $\Phi_\lambda \in C^s$ for $s \in (0,1)$, $\Phi_\lambda $ is log-Lipschitz for $s=1$ and Lipschitz for $s>1$. 
\end{thm}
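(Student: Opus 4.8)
The plan is to obtain $(\phi_\lambda,\vartheta_\lambda)$ as a locally uniform limit of the periodic family by a compactness argument, to transfer the qualitative and quantitative properties to the limit, and finally to read off the solitary wave together with its decay after the Galilean shift \eqref{Gal_Trans}. I would organise the proof in the order compactness, then identification of the limit, then non-triviality, then transformation.

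First I would extract the limit. The speeds $\mu_{P_n,\lambda}$ lie in $(0,1]$ by Theorem \ref{thm-1}(iv), so along a subsequence $\mu_{P_n,\lambda}\to\vartheta_\lambda\in[0,1]$. By \eqref{eq4.1} the functions $\varphi_{P_n,\lambda}$ are bounded in $L^\infty$ uniformly in $n$, and the regularity statements of Theorem \ref{thm-1}(iii) ($C^s$ for $s\in(0,1)$, log-Lipschitz for $s=1$, Lipschitz for $s>1$), together with the crest-independent interior estimates of Lemma \ref{Smoothness}, furnish a common modulus of continuity on every compact set, uniformly in $P_n$. Arzel\`a--Ascoli and a diagonal argument then yield a further subsequence converging locally uniformly to some $\phi_\lambda\in C(\mathbb R)$. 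To see that $(\phi_\lambda,\vartheta_\lambda)$ solves \eqref{eq:steady} I would pass to the limit in the convolution form $\mu_{P_n,\lambda}\varphi_{P_n,\lambda}=\varphi_{P_n,\lambda}^2+K_{s,P_n}*\varphi_{P_n,\lambda}$, splitting the convolution into a near field, controlled by local uniform convergence, and a far field, controlled by the exponential tail $K_s(x)\lesssim_s e^{-|x|}$ of Lemma \ref{lem-K-1} and the uniform $L^\infty$ bound, using that $K_{s,P_n}\to K_s$ as $P_n\to\infty$ by \eqref{eq:periodisedKernel}. Evenness, monotonicity on $(-\infty,0)$, boundedness, the two-sided bound $\vartheta_\lambda-1\le\phi_\lambda\le\tfrac{\lambda\vartheta_\lambda}{2}$, and the crest normalisation $\phi_\lambda(0)=\tfrac{\lambda\vartheta_\lambda}{2}$ all pass to the limit from Theorem \ref{thm-1}(i),(iv).

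Next I would establish non-triviality and pin down $\vartheta_\lambda$. The uniform lower bound of Theorem \ref{prop-delta} survives the limit, giving $\tfrac{\vartheta_\lambda}{2}-\phi_\lambda(x)\ge\delta|x|^{\sigma}$ (respectively $\delta|x\log|x||$ when $s=1$) for small $|x|$; in particular $\phi_\lambda$ is non-constant. If $\vartheta_\lambda=0$ then the bounds force $\phi_\lambda\le 0$ and $\phi_\lambda(0)=0$, whence evaluating \eqref{eq:steady} at the crest gives $0=-\phi_\lambda(0)^2=\Lambda^{-s}\phi_\lambda(0)=\int_{\mathbb R}K_s(y)\phi_\lambda(y)\dy<0$, since $K_s>0$ and $\phi_\lambda\le 0$ is not identically zero, a contradiction; hence $\vartheta_\lambda>0$. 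Strict monotonicity on $(-\infty,0)$ then follows from the non-decreasing, non-constant nature of $\phi_\lambda$, the strict positivity of $K_s$, and the strict monotonicity of $\Lambda^{-s}$ (Proposition \ref{Prop_bessel_2}(i)) applied to the relation $\Lambda^{-s}\phi_\lambda'=(\vartheta_\lambda-2\phi_\lambda)\phi_\lambda'$. For $\lambda\in(0,1)$ one has $\phi_\lambda(0)=\tfrac{\lambda\vartheta_\lambda}{2}<\tfrac{\vartheta_\lambda}{2}$, so by monotonicity $\phi_\lambda<\tfrac{\vartheta_\lambda}{2}$ on all of $\mathbb R$ and Lemma \ref{Smoothness} gives $\phi_\lambda\in C^\infty(\mathbb R)$; for $\lambda=1$ the crest regularity is inherited from the uniform estimates of Theorem \ref{thm-1}(iii).

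Finally I would carry out the Galilean shift and extract the decay. Setting $\Phi_\lambda=\phi_\lambda+1-\vartheta_\lambda$ and $\mu_\lambda=2-\vartheta_\lambda$ as in \eqref{Gal_Trans} produces a solution of \eqref{eq:steady} with speed $\mu_\lambda$ and the same regularity as $\phi_\lambda$; the bound $\phi_\lambda\ge\vartheta_\lambda-1$ gives $\Phi_\lambda\ge0$, and translating the upper bound yields $\Phi_\lambda\le\mu_\lambda-1+\lambda(1-\tfrac{\mu_\lambda}{2})$. Since $\Phi_\lambda$ is a non-negative, bounded, non-trivial solution possessing limits at $\pm\infty$ (monotonicity), Corollary \ref{cor-1} forbids unit speed, so $\mu_\lambda\ne1$, i.e.\ $\vartheta_\lambda\ne1$; combined with $\vartheta_\lambda\in(0,1]$ this gives $\vartheta_\lambda\in(0,1)$ and the supercritical speed $\mu_\lambda\in(1,2)$. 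The tail limit $\gamma:=\lim_{x\to-\infty}\phi_\lambda(x)$ exists and equals $\vartheta_\lambda-1$ or $0$ by Proposition \ref{prop-Phi}; the value $0$ is excluded because it would make $\phi_\lambda\ge0$ non-trivial with subcritical speed $\vartheta_\lambda<1$, so that the integrand $\phi_\lambda((\vartheta_\lambda-1)-\phi_\lambda)$ in Corollary \ref{cor-1} is strictly negative, contradicting the vanishing of its average. Hence $\gamma=\vartheta_\lambda-1$ and $\lim_{x\to\pm\infty}\Phi_\lambda(x)=0$, with $\Phi_\lambda>0$ since the infimum is attained only in the limit. The main obstacle I anticipate is twofold: securing the $P$-uniform modulus of continuity needed for Arzel\`a--Ascoli across all three regimes of $s$ (especially at the crest when $\lambda=1$), and controlling the non-local far-field term when passing to the limit over the unbounded line; the subsequent identification of $\vartheta_\lambda\in(0,1)$ and of the negative tail is then a matter of combining Proposition \ref{prop-Phi}, Corollary \ref{cor-1}, and Theorem \ref{prop-delta}.
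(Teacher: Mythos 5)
Your proposal is correct and follows the paper's overall architecture (Arzel\`a--Ascoli compactness, passage to the limit in the convolution, non-constancy via Theorem~\ref{prop-delta}, tail identification via Proposition~\ref{prop-Phi} and Corollary~\ref{cor-1}, then the Galilean shift), but it diverges in how three steps are executed. For equicontinuity the paper does not invoke the crest regularity of Theorem~\ref{thm-1}(\textit{iii.}) at all: it squares the steady equation to get $[\varphi_{P,\lambda}(x)-\varphi_{P,\lambda}(y)]^2 \le |\Lambda^{-s}\varphi_{P,\lambda}(x)-\Lambda^{-s}\varphi_{P,\lambda}(y)|$ and combines this with the $L^\infty \to C^\alpha$ smoothing of $\Lambda^{-s}$ (Proposition~\ref{Prop_bessel_new}) to obtain a uniform $C^{\alpha/2}$ bound; this is self-contained and sidesteps the question your route must confront, namely whether the H\"older/log-Lipschitz/Lipschitz constants of Theorem~\ref{thm-1}(\textit{iii.}) are uniform in $P$ --- the paper establishes that uniformity explicitly only for $s=1$ in \eqref{log-Lips}, and uses it only afterwards, to transfer crest regularity to the limit via the triangle-inequality argument. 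For the limit equation, the paper observes that periodic convolution with $K_{s,P}$ equals line convolution of the periodic extension with $K_s$ and applies dominated convergence (using $|\varphi_{P_n,\lambda}|\le 1$ and $\|K_s\|_{L^1}=1$), so your near/far-field splitting with $K_{s,P_n}\to K_s$ is workable but unnecessary. Your exclusions of $\vartheta_\lambda=0$ (crest evaluation giving $0=\Lambda^{-s}\phi_\lambda(0)<0$) and of the zero tail (strict negativity of the integrand in Corollary~\ref{cor-1} when $\vartheta_\lambda<1$) are valid alternatives to the paper's arguments, which instead test the two constant solutions against \eqref{final_bound} and $\vartheta_\lambda\le 1$, use Proposition~\ref{Mu_ge_1} to force $\vartheta_\lambda=1$ when the tail vanishes (and then Corollary~\ref{cor-1} for the contradiction), and rule out $\mu_\lambda=2$ via Corollary~\ref{cor-1}. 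One repair to make in your write-up: deducing strict monotonicity from the identity $\Lambda^{-s}\phi_\lambda'=(\vartheta_\lambda-2\phi_\lambda)\phi_\lambda'$ presupposes differentiability, which fails at the crest when $\lambda=1$, and Proposition~\ref{Prop_bessel_2}(\textit{ii}) is stated for periodic functions; argue instead with symmetric differences as in \eqref{temp-integral} and the positivity of $K_s$ on the line, which is what the paper's citation of Theorem 4.3 of Ehrnstr\"om--Nik--Walker amounts to.
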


\begin{proof}
The approach of the proof is similar to the one presented in \cite{MR4531652}, however, here we deal with the operator $\Lambda^{-s}$ and different cases of $s > 0$. We assume that there exists a sub-sequence $\{P_n\}_{n}$ such that
	$\lim_{P_n \to \infty} \mu_{P_n, \lambda} = \vartheta_\lambda.$
It can be easily shown that
\begin{align*}
[\varphi_{P, \lambda}(x) - \varphi_{P, \lambda}(y)]^2 
\leq
\big|
\Lambda^{-s}\varphi_{P, \lambda}(x) - \Lambda^{-s}\varphi_{P, \lambda}(y)
\big|,
\end{align*}
for all $x, y\in \mathbb R$ . From the Prop. \ref{Prop_bessel_new} and the following remark,  $\Lambda^{-s}$ is bounded from $L^\infty(\mathbb R)$ to ${C}^{\alpha}(\mathbb R)$ for any $\alpha \in (0,1)$,
\begin{align*}
\frac{\big|
\Lambda^{-s}\varphi_{P, \lambda}(x) - \Lambda^{-s}\varphi_{P, \lambda}(y)
\big|}{|x-y|^{\alpha}} 
\leq \| \Lambda^{-s}\varphi_{P, \lambda} \|_{{C}^{\alpha}(\mathbb R)}
%
%
\leq \| \Lambda^{-s} \|_{\mathcal{L}(L^{\infty}(\mathbb R),{C}^{\alpha}(\mathbb R))} \| \varphi_{P, \lambda} \|_{{\infty}}
\end{align*}
for all $x, y \in \R$ which shows that $\{\varphi_{P_n, \lambda}\}_n$ is uniformly bounded in 
$C^{\alpha/2}(\mathbb R)$, for some $\alpha \in (0,1)$.
Since $\{\varphi_{P_n, \lambda}\}_n$ is equicontinuous, by Arzel\`a--Ascoli theorem, there exists a subsequence converging locally uniformly to a function $\phi_\lambda \in C(\mathbb R)$.
Also, $|\varphi_{P_n, \lambda}| \leq 1$ and $\|K_s\|_{L^1(\mathbb R)} = 1$, 
Lebesgue's dominated convergence theorem gives
\begin{align*}
	\Lambda^{-s} \varphi_{P_n, \lambda} (x) = K_s * \varphi_{P_n, \lambda}(x)
\to K_s * \phi_\lambda(x) = \Lambda^{-s}  \phi_\lambda(x)
\end{align*}
as $P_n \to \infty$ for all $x \in \mathbb R$. Thus, $\phi_\lambda$ solves \eqref{eq:steady} with wave speed $\vartheta_\lambda$.

 From Theorem \ref{prop-delta}, we have uniform bound for the desired solution $\phi_\lambda(x)$ near the origin as
\begin{eqnarray}\label{final_bound}
   \phi_\lambda(x) \leq \left\{\begin{array}{lr} \frac{\vartheta_\lambda}{2} - \delta |x|^{s}, & 0<s<1\\
    \frac{\vartheta_\lambda}{2} - \delta |x \log|x|| , & s = 1 \\
     \frac{\vartheta_\lambda}{2} - \delta |x|, & s>1
    \end{array}\right..
\end{eqnarray}

\noindent Recall that the only constant solutions to \eqref{eq:steady} is $\phi_\lambda = 0$ and $\phi_\lambda = \vartheta_\lambda - 1$. 
When $\phi_\lambda = 0$, $\phi_\lambda(0) = \frac{\lambda\vartheta_\lambda}{2}$ gives
$\vartheta_\lambda = 0$,
and this will contradict to \eqref{final_bound}.
When 
    $\phi_\lambda = \vartheta_\lambda - 1$, 
we have, 
    $ \lambda \frac{\vartheta_\lambda}{2} = \vartheta_\lambda - 1$, 
i.e, 
	$\vartheta_\lambda = \frac{2}{2 - \lambda}$.
Since,
$0 < \lambda \leq 1$, this contradicts with $0 < \vartheta_\lambda \leq 1$. Hence, $\phi_\lambda$ is non-constant. 

The evenness of $\phi_\lambda$ is inherited from $\{\varphi_{P_n, \lambda}\}_n$, and it is easy to show that $\phi_\lambda$ is strictly increasing on $(-\infty, 0)$, one can follow the same steps as in \cite[Theorem 4.3]{MR4531652}.
The smoothness of $\phi_\lambda$ on $\mathbb R$ for $0 < \lambda < 1$ is followed from Lemma \ref{Smoothness}.  The property of $\phi_\lambda$ of being non-constant with monotonicity shows that it is  non-periodic.
Again, since $\phi_\lambda$ is bounded, Proposition (\ref{prop-Phi}) tells $\lim_{x \to \infty} \phi_\lambda(x)$ could be $0$ or $\vartheta_\lambda - 1$. Assuming $\lim_{x \to \infty} \phi_\lambda(x) = 0$ immediately gives $\phi_\lambda$ is non-negative. Now, from Proposition (\ref{Mu_ge_1}) and our assumption here, it yields $\vartheta_\lambda = 1$. However, from Corollary (\ref{cor-1}), only trivial solutions can have unit wave speed. Hence, $\lim_{x \to \infty} \phi_\lambda(x) = \vartheta_\lambda - 1 < 0.$ 

The limiting non-periodic solution inherits its regularity from the corresponding periodic wave. As a particular case when $s=1$, we may use the triangle inequality and the earlier estimate (see \eqref{log-Lips}) to write
\begin{align*}
     \big| \phi_\lambda(x) - \phi_\lambda(y) \big| 
    & \leq \big| \phi_\lambda(x) - \phi_{\lambda,P}(x) \big|
    + \big| \phi_\lambda(y) - \phi_{\lambda,P}(y) \big| + \big| \phi_{\lambda,P}(x) - \phi_{\lambda,P}(y) \big|\\
    &\leq \big| \phi_\lambda(x) - \phi_{\lambda,P}(x) \big|
    + \big| \phi_\lambda(y) - \phi_{\lambda,P}(y) \big| +  M |x-y| \log \left( 1 + \frac{1}{|x-y|} \right).
\end{align*}
Now fix a compact interval \(K\) and consider \(x,y \in K\). By letting \(P \to \infty\), because of the uniform convergence, 
the two first terms in the right-hand side converges to \(0\). Therefore,
\begin{align*}
     \big| \phi_\lambda(x) - \phi_\lambda(y) \big| 
    \leq M |x-y| \log \left( 1 + \frac{1}{|x-y|} \right),
\end{align*}
for all \(x,y \in K\). But since \(K\) is arbitrary, this estimate holds for all \(x,y \in \mathbb{R}\).

Now, we only have to exclude the negative tails to obtain the positive solutions say $\Phi_\lambda $ as $x$ approaches $\infty$. For this, we apply the Galilean transformation \eqref{Gal_Trans} which gives the solitary wave solution with wave speed $\mu_\lambda \in (1,2)$. We eliminate the possibility that $\mu_\lambda = 2$ as then again from Corollary (\ref{cor-1}), it would lead us to constant solution $\Phi_\lambda \equiv 0$ or $\Phi_\lambda \equiv 1$. All other properties of $\Phi_\lambda $ are eventually satisfied including smoothness. 
\end{proof}

In the following subsection, we discuss the decay rate of solitary waves for the equation \eqref{eq-fKdV} in accordance with the work of Arnesen \cite{MR4324298}. Several other recent works in this area for different equations can be found in \cite{bruell2017symmetry,eychenne2023decay,pei2020exponential}.

\subsection{Exponential decay}

It has been shown that the decay rate of solitary-wave solutions is related to the decay rate of the associated convolution kernel. Rewriting the steady equation as
\eqref{eq:steady} as
\begin{align*}
    \varphi(\mu-\varphi)=H_\mu * \varphi^2, \quad \text{where} \quad H_\mu = \mathcal{F}^{-1}\left(\frac{m}{\mu-m}\right),
\end{align*}
shifts the focus to $H_\mu$. The exponential decay of $H_\mu$ implies the exponential decay of the solution, with the exact rate depending on the wave speed $\mu$ and $s$.
For this section, we use $m(\xi) = (1+\xi^2)^{-s/2}$ for conciseness.

Arnesen \cite{MR4324298} studies the exact decay rate and symmetry of solitary waves for a class of non-local dispersive wave equations $u_t + (\mathcal{L}u + G(u))_x = 0$, reduced to the steady form 
 \begin{align}\label{Mathias.Eq}
     -\mu \varphi + \mathcal{L}\varphi + G(\varphi) = 0.
 \end{align}
Here $\mathcal{L}$ is a Fourier multiplier operator with a symbol $m$ defined by
   $ \widehat{\mathcal{L} f}(\xi) = m(\xi) \widehat f (\xi),$
and $G$ is a nonlinear function. 
Under certain conditions on $m$ and $G$ (as detailed in the theorem below), solitary waves for this class of equations exhibit exponential decay, have only one crest, and are, in general, symmetric. We provide only the main theorem here, and for the deeper insights one can refer \cite{MR4324298}.
\begin{thm}\cite{MR4324298}\label{Mathias_theorem}
    Let $m(\xi)$ be an even, real analytic function, and bounded above by $\mu$ for all $\xi \in \R$. Let $G:\R \to \R$ be bounded on every compact set, and satisfy $|G(u)|\ls |u|^t$ for all small values of $u$ and for some $t>1$. 
    If the function $m(\xi)$ follows the inequality
    \begin{align*}
        |m^{(k)}(\xi)| \leq C_k(1+|\xi|)^{-s-k}, \quad k = 0, 1,2, \dots,
    \end{align*}
    for some $s>0$, where $C_k$ are positive constants depending on $k$ such that
    \begin{align}\label{coeff.1}
    \lim_{k \to \infty} \frac{C_{k+1}/(k+1)!}{C_{k}/k!} = l, \quad \text{for any} \quad l \geq 0,
\end{align}
%
then for a non-trivial solution $u \in L^\infty(\mathbb R)$ with  $\lim_{|x| \to \infty} u(x) = 0$, 
there exists a number $\delta_{\mu}>0$ depending upon $m$ and $\mu$ such that
\begin{align*}
    e^{\delta |.|} u(.) \in L^1 \cap L^{\infty}(\R),
\end{align*}
for all $ \delta \in (0, \delta_{\mu}).$
\end{thm}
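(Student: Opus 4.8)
The plan is to reduce the whole statement to the exponential decay of a single convolution kernel and then transport that decay to $u$ using the superlinearity of $G$. First I would rewrite the steady equation \eqref{Mathias.Eq} in resolvent form. Since $m(\xi)\le\mu$ on $\R$ — in fact $\sup_\xi m(\xi)<\mu$ for a wave decaying to zero — the symbol $\mu-m(\xi)$ is bounded below by a positive constant, and subtracting its value at infinity gives
\[
u=\frac{1}{\mu}\,G(u)+\frac{1}{\mu}\,H_\mu*G(u),\qquad H_\mu=\mathcal{F}^{-1}\Big(\frac{m}{\mu-m}\Big),
\]
which is precisely the form already isolated in the text. Because $|G(u)|\lesssim|u|^t$ with $t>1$ and $u(x)\to0$, the local term $\tfrac1\mu G(u)$ is of lower order than $u$ for large $|x|$, so the decay of $u$ will be governed entirely by $H_\mu$.

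The analytic heart of the argument, and the step I expect to be hardest, is to prove that $H_\mu$ decays exponentially. I would use the Gevrey-type hypothesis to continue the symbol into a complex strip. The bounds $|m^{(k)}(\xi)|\le C_k(1+|\xi|)^{-s-k}$ together with \eqref{coeff.1} give, by passing from the ratio test to the root test, a radius of convergence of the Taylor series of $m$ at $\xi_0$ of at least $(1+|\xi_0|)/l$; hence $m$ extends holomorphically to the strip $\{|\operatorname{Im}z|<1/l\}$ (to all of $\mathbb{C}$ when $l=0$), with $m(z)\to0$ as $|\operatorname{Re}z|\to\infty$ uniformly in any substrip. Since $\mu-m>0$ on $\R$ and its complex zeros are confined to a compact set and avoid the real axis, one may fix $\delta_\mu\in(0,1/l]$ so small that $\mu-m(z)\ne0$ on $\{|\operatorname{Im}z|<\delta_\mu\}$; this is exactly where the constant $\delta_\mu$ originates, depending on $m$ and $\mu$. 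Then $\xi\mapsto\frac{m}{\mu-m}$ is holomorphic and decaying on this strip, and a Paley--Wiener contour shift — moving the line of integration in $H_\mu(x)=\frac{1}{2\pi}\int\frac{m}{\mu-m}e^{ix\xi}\,\mathrm d\xi$ to $\operatorname{Im}\xi=\operatorname{sgn}(x)\,\delta$ for $\delta<\delta_\mu$ — produces the factor $e^{-\delta|x|}$ and yields $e^{\delta|\cdot|}H_\mu\in L^1\cap L^\infty$. The delicate point is justifying the shift when $s$ is small: $\frac{m}{\mu-m}\simeq(1+|\xi|)^{-s}$ need not be integrable on horizontal lines for $s\le1$, so one must first subtract the constant part (already done above) and, if necessary, integrate by parts using the faster decay of the derivatives of the symbol to make the shifted integrals converge.

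Finally I would transfer this decay to the solution by a barrier argument exploiting $t>1$. Writing $G(u(x))=b(x)u(x)$ with $|b(x)|\lesssim|u(x)|^{t-1}\to0$, the identity above gives, for $|x|\ge R$ with $R$ large, the inequality $\tfrac12|u(x)|\le\tfrac1\mu\big(H_\mu*(|b|\,|u|)\big)(x)$. For any $\kappa<\delta_\mu$ I would test the exponential barrier $w(x)=Ae^{-\kappa|x|}$: splitting the convolution into $\{|y|<R\}$ and $\{|y|\ge R\}$, the near-field part is controlled by the faster decay $e^{-\delta_\mu|x|}$ of $H_\mu$, while the far-field part carries the small factor $\sup_{|y|\ge R}|b|\le\varepsilon$. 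Choosing $R$ large (so $\varepsilon$ is small) and $A$ large makes $w$ a supersolution, whence $|u|\le w$. This gives $e^{\delta|\cdot|}u\in L^1\cap L^\infty$ for every $\delta<\kappa$, and letting $\kappa\uparrow\delta_\mu$ yields the claim for all $\delta\in(0,\delta_\mu)$; equivalently one may run a geometric bootstrap $\delta'\mapsto\min(t\delta',\delta_\mu)$ that uses the superlinear gain directly. The principal obstacle remains the complex-analytic step: securing the uniform strip of analyticity, the non-vanishing of $\mu-m$ there, and the validity of the contour shift in the low-$s$ regime.
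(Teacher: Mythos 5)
Your proposal is correct in outline and follows essentially the same route as the paper's source: Theorem \ref{Mathias_theorem} is not proved in this paper at all but quoted from Arnesen \cite{MR4324298}, whose argument is precisely the one you sketch --- the resolvent form with kernel $H_\mu=\mathcal{F}^{-1}\big(\tfrac{m}{\mu-m}\big)$, exponential decay of $H_\mu$ obtained from analytic continuation of $m$ into a complex strip with $\delta_\mu$ determined by the first zero of $\mu-m$ on the imaginary axis (compare the paper's formula \eqref{H-mu} and the choice $(1-\delta_{\mu,s}^2)^{-s/2}=\mu$), and a bootstrap exploiting $|G(u)|\lesssim|u|^{t}$ with $t>1$ to transfer the kernel's decay to $u$. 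The paper itself only verifies the hypotheses for the specific symbol $(1+\xi^2)^{-s/2}$ (via Fa\`a di Bruno's formula), so the relevant comparison is with \cite{MR4324298}, and your sketch matches it, including the delicate low-$s$ point you flag, which is handled there by splitting $H_\mu$ into a locally $L^p$ singular part plus a bounded part rather than by a direct contour shift of a non-integrable symbol.
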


Our case with the fKdV equation with \emph{inhomogeneous} symbol $(1+\xi^2)^{-\frac{s}{2}}$ aligns with the assumptions made in Theorem \eqref{Mathias_theorem}.
We have $G(u) = u^2$, which immediately fulfils all the requirements on $G$,  
and the symbol  $m(\xi) = (1 + \xi^2)^{-\frac{s}{2}}$ is even, real analytic, and bounded above by $\mu$ (as $\mu$ is the supercritical wave speed obtained in Theorem \eqref{main-thm}). We only need to prove the limit \eqref{coeff.1}. 
For this, we use Faà di Bruno's formula to calculate the following derivatives 
\begin{align*}
    D^k_\xi (1 + \xi^2)^{-\frac{s}{2}} &= \sum_{j=0}^{[\frac{k}{2}]} \frac{(-1)^{k-j} 2^{k-2j} k!}{j! (k-2j)!}
    \left[  \frac{s}{2} \left(\frac{s}{2}+1\right) \dots \left(\frac{s}{2}+k-j-1\right)  \right] \xi^{k-2j} (1 + \xi^2)^{-\frac{s}{2}-(k-j)}\\
    &\leq (-1)^k 2^{2k} k! \text{M}_{s,k} (1 + \xi^2)^{-\left(\frac{s+k}{2}\right)},
\end{align*}
where $\text{M}_{s,k}$ is the uniform upper bound (depending on $s$ and $k$) for the series
\begin{align*}
    \sum_{j=0}^{[\frac{k}{2}]} \frac{(-1)^{j} }{j! (k-2j)!} \frac{1}{2^{k+2j}}
       \left[ \frac{s}{2} \left(\frac{s}{2}+1\right) \dots \left(\frac{s}{2}+k-j-1\right) \right].
\end{align*}
This leads 
\begin{align*}
    |D^k_\xi (1 + \xi^2)^{-\frac{s}{2}}| \leq   2^{2k} k! C_0 \text{M}_{s,k} (1 + |\xi|)^{-s-k},
\end{align*}
therefore, the $C_k$'s are $2^{2k} k! C_0 \text{M}_{s,k}$, and we have \eqref{coeff.1}. 

{Now, $m(z)$ is analytic for all $z \in \C\backslash\{-i,i\}$ and along the imaginary axis, we have $m(ib)=(1-b^2)^{-s/2}$ which is even and a bijection from $[0,1)$ to $ [1,\infty)$. This implies that there exists $\delta_{\mu,s} \in (0,1)$  satisfying $(1-\delta_{\mu,s}^2)^{-s/2}=\mu$ for $\mu>1$ such that
\begin{align}\label{H-mu}
    H_\mu(x) = e^{\delta_{\mu,s} |x|}\left( v(x) + \frac{2\sqrt{\pi}\mu (1-\delta_{\mu,s}^2)^{\frac{s+2}{2}}}{s \delta_{\mu,s}} \right)
\end{align}
for some even function $v \in L^p(\{x\in\R:|x|\leq1\})$ for all $P\in(0,\infty)$ with
\begin{eqnarray*}
    v(x) \simeq_s \left\{\begin{array}{lr} |x|^{s-1}, & 0<s<1\\
    \log \frac{1 }{|x|}, & s = 1 \\
     1, & s>1
    \end{array}\right., \quad \text{for} \quad |x| \leq 1.
\end{eqnarray*}
We need the expression \eqref{H-mu} for $H_\mu$ to prove the Theorem \eqref{Mathias_theorem} and to find $\delta_{\mu,s}$ (see \cite[Lemma 3.4]{MR4324298}).
And here we conclude that the obtained solitary solutions $\Phi_\lambda$ of \eqref{eq:steady} with wave speeds $\mu_\lambda \in (1,2)$ and $\lambda \in (0,1)$ such that $\lim_{x \to \pm \infty} \Phi_\lambda(x) = 0$ and $0 \leq \Phi_\lambda \leq \mu_\lambda - 1 + \lambda \big( 1 - \frac{\mu_\lambda}{2} \big)$, 
decay exponentially with
\begin{align*}
    e^{\delta |.|} \Phi_\lambda(.) \in L^1 \cap L^{\infty}(\R),
\end{align*}
for all $ \delta \in (0, \delta_{\mu,s})$. 
}

\section*{Acknowledgments}
SY is supported by the ERCIM ‘Alain Bensoussan’ Fellowship Programme at NTNU, Trondheim. The work is supported in part by the project IMod--Partial differential equations, statistics and data: An interdisciplinary approach to data-based modelling, project number 325114, from the Research Council of Norway. The authors express their gratitude to Mats Ehrnström and Douglas Seth for their continuous support and insightful discussions. The authors also thank Gabriele Brüll for discussions regarding the decay rate, and Kristoffer Varholm for his willingness to explain the mechanics behind the work \cite{Ehrnstrom_Ola_Kristo}.

\bibliographystyle{abbrv}
\bibliography{wave-bibliography}

\end{document}